\numberwithin{equation}{section}
\newtheorem{theorem}{Theorem}[section]
\newtheorem{lemma}[theorem]{Lemma}
\newtheorem{proposition}[theorem]{Proposition}
\newtheorem{prop}[theorem]{Proposition}
\newtheorem{cor}[theorem]{Corollary}
\newtheorem{counterexample}[theorem]{Counterexample}
\newtheorem{remark}[theorem]{Remark}
\newtheorem{definition}[theorem]{Definition}
\newcommand{\RR}{\mathbb{R}}
\newcommand{\cO}{\mathcal{O}}
\newcommand{\cS}{\mathcal{S}}
\newcommand{\Id}{{\rm Id }}
\def\bb1{{1\!\!1}}
\def\cG{\mathcal{G}}
\def\cR{\mathcal{R}}
\def\bU{{\bar{U}}}
\def\tx{\tilde x}
\def\R{\Re e}
\def\I{\Im m}
\def\diag{\mbox{diag}}
\newcommand\ul[1]{\underline{#1}}
\newcommand{\wprod}[1]{\langle{#1}\rangle}
\newcommand{\txi}{{\tilde \xi}}
\def\cg{{\hat{\gamma}}}
\begin{document}

\title[Multi-d viscous 
boundary layers] {On asymptotic stability of noncharacteristic
viscous 
boundary layers}

\author[Toan Nguyen]{Toan Nguyen}

\date{\today}

\thanks{I would like to thank Professor Kevin Zumbrun for his many advices, support,
and helpful discussions. This work was supported in part by the
National Science Foundation award number DMS-0300487.}

\address{Department of Mathematics, Indiana University, Bloomington, IN 47402}
\email{nguyentt@indiana.edu}

\maketitle

\begin{abstract} We extend our recent work with K. Zumbrun 
on long-time stability of multi-dimensional noncharacteristic viscous
boundary layers of a class of symmetrizable hyperbolic-parabolic
systems. Our main improvements are (i) to establish the stability
for a larger class of systems in dimensions $d\ge 2$, yielding the
result for certain magnetohydrodynamics (MHD) layers; (ii) to drop
a technical assumption 
on the so--called glancing set which
was required in 
previous works. We also provide a different proof of low-frequency
estimates by employing the method of Kreiss' symmetrizers, replacing
the one relying on detailed derivation of pointwise bounds on the
resolvent kernel.
\end{abstract}

\tableofcontents



\section{Introduction}
Boundary layers occur in many physical settings, such as gas
dynamics and magnetohydrodynamics (MHD) with inflow or outflow
boundary conditions, for example the flow around an airfoil with
micro-suction or blowing. Layers satisfying such boundary conditions
 are called noncharacteristic layers; see, for example, the physical discussion in \cite{S,SGKO}. See also
\cite{GMWZ5,YZ,NZ1,NZ2,Z5} for further discussion.

In this paper, we study the stability of boundary layers assuming
that the layer is noncharacteristic. Specifically, we consider a
boundary layer, or stationary solution, connecting the endstate
$U_+$:
\begin{equation}\label{profile}
\tilde U=\bU(x_1), \quad \lim_{x_1\to +\infty} \bU(x_1)=U_+.
\end{equation}
of a general system of viscous conservation laws on the
quarter-space
\begin{equation}\label{sys}
\tilde U_t +  \sum_jF^j(\tilde U)_{x_j} = \sum_{jk}(B^{jk}(\tilde
U)\tilde U_{x_k})_{x_j}, \quad x\in \mathbb{R}_+^{d},\quad t>0,
\end{equation}
$\tilde U,F^j\in \mathbb{R}^n$, $B^{jk}\in\mathbb{R}^{n \times n}$,
with initial data $\tilde U(x,0)=\tilde U_0(x)$ and boundary
conditions as specified in (B) below.

An fundamental question is to establish {\it asymptotic stability}
of these solutions under perturbation of the initial or boundary
data. This question has been investigated in
\cite{GR,MeZ1,GMWZ5,GMWZ6,YZ,NZ1,NZ2} for arbitrary-amplitude
boundary-layers using Evans function techniques, with the result
that linearized and nonlinear stability reduce to a generalized
spectral stability, or Evans stability, condition. See also the
small-amplitude results of \cite{GG,R3,MN,KNZ,KaK} obtained by
energy methods.

In the current paper, as in \cite{N1} for the shock cases, we apply
the method of Kreiss' symmetrizers to provide a different proof of
estimates on low-frequency part of the solution operator, which
allows us to extend the existing stability result in \cite{NZ2} to a
larger class of symmetrizable systems including MHD equations,
yielding the result for certain MHD layers. We are also able to drop
a technical assumption (H4) that was required in previous analysis
of \cite{Z2,Z3,Z4,GMWZ1,NZ2}.

\subsection{Equations and assumptions}
We consider the general hyperbolic-parabolic system of conservation
laws \eqref{sys} in conserved variable $\tilde U$, with
$$\tilde U = \begin{pmatrix}\tilde u^I\\
\tilde u^{II}\end{pmatrix}, \quad B=\begin{pmatrix}0 & 0 \\
b^{jk}_1 & b^{jk}_2\end{pmatrix},$$ $\tilde u^I\in \RR^{n-r}$, $\tilde
u^{II}\in \RR^{r}$, and $$ \Re\sigma \sum_{jk} b_2^{jk}\xi_j\xi_k \ge
\theta |\xi|^2>0, \quad \forall \xi \in \RR^n\backslash \{0\}.$$

Following \cite{MaZ3,MaZ4,Z3,Z4}, we assume that equations
\eqref{sys} can be written,
alternatively, after a triangular change of coordinates
\begin{equation}\label{Wcoord}
\tilde W:=\tilde W(\tilde U) =\begin{pmatrix}\tilde w^I(\tilde u^I)\\
 \tilde w^{II}(\tilde u^I, \tilde u^{II})\end{pmatrix},
\end{equation}
in {\em the quasilinear, partially symmetric hyperbolic-parabolic
form}
\begin{equation}\label{symmetric}\tilde A^0 \tilde W_t + \sum_j\tilde A^j\tilde W_{x_j} = \sum_{jk}(\tilde
B^{jk} \tilde W_{x_k})_{x_j} + \tilde G,
\end{equation}
where, defining $\tilde W_+:=\tilde W(U_+)$,
\medskip

(A1) $\tilde A^j(\tilde W_+),\tilde A^0,\tilde A^1_{11}$ are
symmetric, $\tilde A^0$ block diagonal, $\tilde A^0\ge \theta_0>0$,
\medskip

(A2) for each $\xi \in \RR^d\setminus \{0\}$, no eigenvector of
$\sum_j\xi_j\tilde A^j(\tilde A^0)^{-1}(\tilde W_+)$ lies in the
kernel of $\sum_{jk}\xi_j\xi_k\tilde B^{jk}(\tilde A^0)^{-1}(\tilde
W_+)$,
\medskip

(A3) $\tilde B^{jk}=\begin{pmatrix}0 & 0 \\ 0 & \tilde
b^{jk}\end{pmatrix}$, $\sum\tilde b^{jk}\xi_j\xi_k\ge
\theta|\xi|^2$, and $\tilde G=\begin{pmatrix}0\\\tilde g
\end{pmatrix}$ with $\tilde g(\tilde W_x,\tilde W_x)=\cO(|\tilde
W_x|^2).$
\medskip

Along with the above structural assumptions, we make the following
technical hypotheses:
\medskip

(H0) $F^j, B^{jk}, \tilde A^0, \tilde A^j, \tilde B^{jk}, \tilde
W(\cdot), \tilde g(\cdot,\cdot) \in C^{s}$, with
$s\ge [(d-1)/2]+5$ in our analysis of linearized stability, and
$s\ge s(d):=[(d-1)/2]+7$ in
our analysis of nonlinear stability.
\medskip

(H1) $\tilde A_1^{11}$ is either strictly positive or strictly
negative, that is, either $\tilde A_1^{11}\ge \theta_1>0,$ or
$\tilde
A^{11}_1\le -\theta_1<0$. (We shall 
call these cases the {\em inflow case} or {\em outflow case},
correspondingly.)
\medskip

(H2) The eigenvalues of $dF^1(U_+)$ are
distinct and nonzero.
\medskip

(H3) The eigenvalues of $\sum_j \xi_jdF^j(U_+)$ have constant
multiplicity with respect to $\xi\in \RR^d$, $\xi\ne 0$.

\medbreak

\textbf{Alternative Hypothesis H3$'.$ } The constant multiplicity
condition in Hypothesis (H3) holds for the compressible Navier–
Stokes equations whenever  is hyperbolic. We are able to treat
symmetric dissipative systems like the equations of viscous MHD, for
which the constant multiplicity condition fails, under the following
relaxed hypothesis. \medbreak

(H3$'$) The eigenvalues of $\sum_j \xi_jdF^j(U_+)$ are either
semisimple and of constant multiplicity or totally nonglancing in
the sense of \cite{GMWZ6}, Definition 4.3. \medbreak

\textbf{Additional Hypothesis H4$'$ (in 3D).} In the treatment of
the three--dimensional case, the analysis turns out to be 
quite delicate and we are able to establish the stability under the
following additional (generic) hypothesis (see Remark \ref{rem-H4'}
and Appendix \ref{genH4} for discussions of this condition):

\medbreak

 (H4$'$) In the case the eigenvalue $\lambda_k(\xi)$ of
$\sum_j
\xi_jdF^j(U_+)$ is semisimple and of constant multiplicity, we assume further that $\nabla_{\txi}\lambda_k \not =0$ 
when $\partial_{\xi_1} \lambda_k=0$, $\xi \not=0$.


\begin{remark} \textup{Here we stress that we are able to
drop the following structural assumption, which is needed for the
earlier analyses of \cite{Z2,Z3,Z4,NZ2}.}\medbreak

\textup{(H4) The set of branch points of the eigenvalues of $(\tilde
A^1)^{-1}(i\tau \tilde A^0+ \sum_{j\ne 1} i\xi_j\tilde A^j)_+$,
$\tau \in \RR$, $\tilde \xi\in \RR^{d-1}$ is the (possibly
intersecting) union of finitely many smooth curves
$\tau=\eta_q^+(\tilde \xi)$, on which the branching eigenvalue has
constant multiplicity $s_q$ (by definition $\ge 2$).}
\end{remark}

We also assume:

(B) Dirichlet boundary conditions in $\tilde W$-coordinates:
\begin{equation}\label{inBC}
(\tilde w^I, \tilde w^{II})(0,\tx,t)=\tilde h(\tx,t):=(\tilde
h_1,\tilde h_2)(\tx,t)\end{equation} for the inflow case, and
\begin{equation}\label{outBC}
\tilde w^{II}(0,\tx,t)=\tilde h(\tx,t)\end{equation} for the outflow
case, with $x = (x_1,\tx)\in \RR^d$.
\\

This is sufficient for the main physical applications; the situation
of more general, Neumann and mixed-type boundary conditions on the
parabolic variable $\tilde w^{II}$ can be treated as discussed in
\cite{GMWZ5,GMWZ6}.

\subsection{The Evans condition and strong spectral stability}
A necessary condition for linearized stability is weak spectral
stability, defined as nonexistence of unstable spectra $\Re \lambda
>0$ of the linearized operator $L$ about the wave. As described in
\cite{Z2,Z3}, this is equivalent to nonvanishing for all $\tilde
\xi\in \RR^{d-1}$, $\Re \lambda>0$ of the {\it Evans function}
$$
D_L(\tilde \xi, \lambda),
$$a Wronskian associated with the family of eigenvalue ODE obtained by
Fourier transform in the directions $\tilde x:=(x_2,\dots, x_d)$.
See \cite{Z2,Z3,GMWZ5,GMWZ6,NZ2} for further discussion.

\begin{definition}\label{strongspectral}
\textup{ We define {\it strong spectral stability} as {\it uniform
Evans stability}:
\begin{equation}\tag{D}
|D_L(\tilde \xi, \lambda)|\ge \theta(C)>0
\end{equation}
for $(\tilde \xi, \lambda)$ on bounded subsets $C\subset \{\tilde
\xi\in \RR^{d-1}, \, \Re \lambda \ge 0\}\setminus\{0\}$. }
\end{definition}

For the class of equations we consider, this is equivalent to the
uniform Evans condition of \cite{GMWZ5,GMWZ6}, which includes an
additional high-frequency condition that for these equations is
always satisfied (see Proposition 3.8, \cite{GMWZ5}). A fundamental
result proved in \cite{GMWZ5} is that small-amplitude
noncharacteristic boundary-layers are always strongly spectrally
stable.

\begin{prop}[\cite{GMWZ5}]\label{specstab}
Assuming (A1)-(A3), (H0)-(H2), (H3$'$), (B) for some fixed endstate
(or compact set of endstates) $U_+$, boundary layers with amplitude
$$
\|\bar U-U_+\|_{L^\infty[0,+\infty]}
$$
sufficiently small satisfy the strong spectral stability condition
(D).
\end{prop}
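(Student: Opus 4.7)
The plan is to establish (D) by perturbing from the zero-amplitude limit, where $\bar U\equiv U_+$ and the linearized eigenvalue ODE has constant coefficients, so that the Evans function $D_L$ can be computed in closed form from the endstate spectral data. The argument splits by frequency regime. In the high-frequency region $|\lambda|+|\tilde\xi|^2\gg 1$ (needed to identify (D) with the full uniform Evans condition of \cite{GMWZ5}), the symmetry of $\tilde A^j$ in (A1), the ellipticity of the parabolic block $\tilde b^{jk}$ in (A3), and the noncharacteristic assumption (H1) produce a spectral gap of order $\sqrt{|\lambda|+|\tilde\xi|^2}$ between decaying and growing modes in the first-order form of the ODE, giving a lower bound on $|D_L|$ that is uniform in the coefficient profile, hence in the amplitude.

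For intermediate frequencies, i.e.\ on any compact subset $C\subset\{\Re\lambda\ge 0,\ (\tilde\xi,\lambda)\ne 0\}$ bounded away from the origin, $D_L(\tilde\xi,\lambda)$ depends continuously on $\bar U$ in $L^\infty$. It therefore suffices to verify the bound in the amplitude-zero limit. In that limit the stable subspace of the eigenvalue ODE is smooth in $(\tilde\xi,\lambda)$ on $C$ by the block structure provided by (H3$'$), and $D_L$ reduces, up to a nonvanishing analytic factor, to the Lopatinski determinant of the constant-coefficient hyperbolic-parabolic half-space problem at $U_+$ with Dirichlet data (B). Noncharacteristicity (H1) together with (H2) ensure this determinant is bounded below on $C$, yielding (D) in the limit, and, by continuity in $\bar U$, for all sufficiently small amplitudes.

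The delicate regime is low frequency near $(\tilde\xi,\lambda)=0$. I would introduce polar coordinates $(\lambda,\tilde\xi)=\rho(\hat\lambda,\hat{\tilde\xi})$ with $\rho\to 0$ and perform a slow/fast decomposition of the resolvent ODE: the parabolic block is inverted to leading order using (A3), while the slow part reduces to a first-order hyperbolic system whose $\rho=0$ symbol is $(\tilde A^1)^{-1}\bigl(\hat\lambda\tilde A^0+\sum_{j\ne 1}i\hat\xi_j\tilde A^j\bigr)$ evaluated at $U_+$. The Evans function then factors, up to units, through the Lopatinski determinant of this reduced hyperbolic problem, and the task is to bound that determinant below uniformly in $(\hat\lambda,\hat{\tilde\xi})$. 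This is where Kreiss-type symmetrizers enter: under (H3$'$) the modes are either semisimple of constant multiplicity or totally nonglancing in the sense of \cite{GMWZ6}, and in either case a symmetrizer can be constructed to yield the required lower bound. Perturbation in both $\rho$ and the amplitude then closes the argument.

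The main obstacle is precisely the low-frequency piece: the block structure of the reduced hyperbolic symbol is not smooth at branch points of the characteristic variety, and the earlier analyses of \cite{Z2,Z3,Z4,NZ2} circumvented this by the structural assumption (H4). Replacing (H4) by the weaker (H3$'$) forces one to appeal to the totally-nonglancing symmetrizer construction of \cite{GMWZ6} at branching modes; only once this uniform Lopatinski lower bound is secured can one patch the three frequency regimes into a single bound on $C$ and so conclude strong spectral stability in the small-amplitude regime.
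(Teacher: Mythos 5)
The paper does not prove this proposition: it is imported verbatim from \cite{GMWZ5} (note the citation in the proposition header), so there is no internal proof to compare yours against. On its own terms your outline reproduces the standard three-regime Evans-function strategy, and the high-frequency gap argument and the continuity-in-$\bar U$ reduction on compacta away from the origin are sound in outline.

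There is, however, a genuine gap in how you close the middle and low-frequency regimes. You assert that in the amplitude-zero limit $D_L$ reduces to a constant-coefficient Lopatinski determinant whose lower bound is ``ensured by (H1) together with (H2).'' Noncharacteristicity and distinctness of the eigenvalues of $dF^1(U_+)$ do not by themselves imply a uniform Lopatinski condition for a half-space problem; that nonvanishing is precisely the nontrivial content to be proved, and it can fail for noncharacteristic strictly hyperbolic problems with ill-chosen boundary conditions. What actually closes the argument in \cite{GMWZ5} is the symmetric dissipative structure (A1), (A3): for symmetrizable systems the Dirichlet conditions (B) are maximally dissipative, and the residual boundary conditions of the reduced hyperbolic problem obtained in the small-amplitude limit remain maximally dissipative, which is what yields the uniform Lopatinski bound. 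Relatedly, Kreiss-type symmetrizers do not \emph{prove} the Lopatinski condition --- they convert an assumed Lopatinski condition into maximal estimates --- so invoking them at low frequency ``to yield the required lower bound'' inverts the logic; the symmetrizers needed here are the degenerate/dissipative ones whose existence rests on the maximal dissipativity just described. Finally, the low-frequency factorization of the Evans function for a layer contains, besides the hyperbolic Lopatinski determinant, a transversality factor measuring the nondegeneracy of the connection of $\bar U$ to $U_+$ in the profile ODE; this too must be shown nonzero for small amplitude, and your sketch omits it.
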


As demonstrated in \cite{SZ,Z5}, stability of large-amplitude
boundary layers may fail for the class of equations considered here,
even in a single space dimension, so there is no such general
theorem in the large-amplitude case. Stability of large-amplitude
boundary-layers may be checked efficiently by numerical Evans
computations; see, e.g., \cite{HLZ,CHNZ,HLyZ1,HLyZ2}.

\subsection{Main results}
Our main results are as follows.

\begin{theorem}[Linearized stability]\label{theo-lin}
Assuming (A1)-(A3), (H0)-(H2), (H3$'$), (H4$'$), (B), and (D), we
obtain the asymptotic $L^1\cap H^{[(d-1)/2]+2} \rightarrow L^p$
stability in dimensions $d\ge 3$, and any $2\le p\le \infty$, with
rates of decay
\begin{equation}\begin{aligned} |U(t)|_{L^2}&\le C
(1+t)^{-\frac{d-2}{4}-\epsilon} |U_0|_{L^1\cap L^2},\\
|U(t)|_{L^p}&\le C (1+t)^{-\frac{d-1}{2}(1-1/p) +
\frac1{2p}-\epsilon} |U_0|_{L^1\cap H^{[(d-1)/2]+2}},
\end{aligned}\end{equation}
for some $\epsilon>0$, provided that the initial perturbations $U_0$
are in $L^1 \cap H^{[(d-1)/2]+2}$, and zero boundary perturbations.
\end{theorem}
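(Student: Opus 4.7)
The overall strategy is the standard reduction to resolvent/semigroup bounds via Laplace--Fourier representation, but with the low-frequency estimates carried out by a Kreiss symmetrizer construction rather than by pointwise resolvent-kernel bounds. Write the linearized equation about $\bU$ as $U_t=LU$ with Dirichlet boundary data. After Fourier transform in $\tx$ and Laplace transform in $t$, the semigroup $S(t)=e^{Lt}$ admits a representation
\begin{equation*}
S(t)U_0 = \frac{1}{(2\pi i)(2\pi)^{d-1}}\int_{\RR^{d-1}}\int_{\Gamma} e^{\lambda t + i\txi\cdot \tx}(\lambda \Id-\hat L_{\txi})^{-1}\hat U_0(\txi,x_1)\,d\lambda\,d\txi,
\end{equation*}
where $\hat L_{\txi}$ is the symbol of $L$ in the $\tx$ variable and $\Gamma$ is a contour to the right of the spectrum which, by the Evans condition (D), we can deform into $\{\R\lambda \le -\theta_0\}$ outside a small neighborhood of the origin. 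I then split $S=S_I+S_{II}$, where $S_I$ localizes to low frequencies $|(\txi,\lambda)|\le r$ and $S_{II}$ to medium/high frequencies.

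\textbf{High-frequency part.} For $S_{II}$ I would use the symmetric hyperbolic--parabolic structure afforded by (A1)--(A3) together with (H0)--(H2) to run Kawashima-type energy estimates on the resolvent equation $(\lambda-\hat L_{\txi})U=f$. Combined with (D) on the compact medium-frequency set and with the uniform Evans high-frequency bound that is automatic for this class (Proposition 3.8 of \cite{GMWZ5}), this yields the resolvent bound $\|(\lambda-\hat L_{\txi})^{-1}\|_{L^2\to L^2}\le C/|(\txi,\lambda)|$ on the deformed contour outside the low-frequency region. Shifting $\Gamma$ into $\R\lambda\le -\theta_0$ then gives exponential decay $|S_{II}(t)U_0|_{H^s}\le Ce^{-\theta t}|U_0|_{H^s}$ at the cost of $H^s$-regularity assumed in (H0).

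\textbf{Low-frequency part (main novelty).} For $S_I$ the principal step is to obtain, for $(\txi,\lambda)$ in a neighborhood of the origin with $\R\lambda\ge 0$, an $L^2$ resolvent estimate whose loss in $|(\txi,\lambda)|$ is exactly offset by the transverse Fourier integration. Block-diagonalize the first-order pencil $\hat L_{\txi}-\lambda$ using the polar-coordinate parametrization $(\txi,\lambda)=\rho(\check\xi,\check\lambda)$; in each block separate hyperbolic modes (treated by a Kreiss symmetrizer with the symmetry assumption (A1), using (H3$'$) and (H4$'$) to construct smooth symmetrizers across glancing points and across constant-multiplicity crossings) from parabolic modes (handled by standard parabolic symmetrizers as in \cite{GMWZ1}). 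Gluing produces a microlocal symmetrizer $\cS(\txi,\lambda)$ yielding an interior estimate. The boundary term produced by integration by parts is controlled by the Lopatinski/Evans determinant, which by (D) is uniformly bounded below. This yields an $L^2$-resolvent bound of the form
\begin{equation*}
\|(\lambda-\hat L_{\txi})^{-1}\|_{L^2_{x_1}\to L^2_{x_1}}\le C/|(\txi,\lambda)|,
\end{equation*}
precisely in place of the pointwise kernel bounds used in \cite{Z3,NZ2}.

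\textbf{Assembly.} The $L^p$ rates are then read off by inserting these resolvent bounds into the Laplace--Fourier representation and performing the standard stationary-phase/scaling analysis in $(\txi,\lambda)$ near the origin. The transverse Fourier integration over $\RR^{d-1}$ produces the heat-kernel-type factor $t^{-(d-1)/2(1-1/p)}$; the boundary reflection contributes the additional $t^{1/(2p)}$ correction (characteristic of half-space parabolic problems), and the $-\epsilon$ loss comes from the small arbitrary slowdown used to pass from resolvent bounds to semigroup bounds via the Pr\"uss/Gearhart framework on the contour $\Gamma$. Interpolation between the $L^2$ and $L^\infty$ bounds and the standard Sobolev embedding $H^{[(d-1)/2]+2}\hookrightarrow L^\infty$ closes the $L^p$ scale.

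\textbf{Main obstacle.} The hardest step is unquestionably the construction of a smooth Kreiss symmetrizer at glancing frequencies of the hyperbolic limit when (H4) is dropped. Without (H4) the branch points of the hyperbolic symbol need not lie on smooth curves, and it is precisely here that (H3$'$) (allowing totally nonglancing blocks) and, in three dimensions, (H4$'$) (forcing $\nabla_{\txi}\lambda_k\ne 0$ at glancing $\partial_{\xi_1}\lambda_k=0$) are used: they guarantee that after a microlocal change of variables the glancing set is locally a smooth submanifold transverse to $\xi_1$, on which Ralston-type symmetrizers can be built and patched. I expect the bulk of the technical work to live in this step.
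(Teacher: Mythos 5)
Your architecture (low/high frequency splitting, symmetrizers at low frequency, contour-integral assembly) matches the paper's, but two of your key quantitative claims would not produce the stated rates. First, the resolvent bound you propose to prove at low frequency, $\|(\lambda-\hat L_{\txi})^{-1}\|_{L^2_{x_1}\to L^2_{x_1}}\le C/\rho$, is in the wrong functional setting: to get $|U_0|_{L^1}$ on the right-hand side of the decay estimates and the rates $t^{-\frac{d-1}{2}(1-1/p)+\frac1{2p}}$ one needs $L^1(x_1)\to L^2(x_1)$ and $L^1(x_1)\to L^\infty(x_1)$ resolvent bounds, degenerating like $\rho^{-3/2+\epsilon}$ and $\rho^{-1+\epsilon}$ respectively (Proposition \ref{prop-resLF}). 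These are obtained mode-by-mode from the block structure of Proposition \ref{prop-blockst}: the paper does not rebuild a Kreiss symmetrizer from scratch but takes the GMWZ maximal $L^2$ estimate \eqref{max-est} as given (this already controls the boundary/Lopatinski term you worry about) and supplements it with the elementary ODE energy estimate of Lemma \ref{lem-bounds} applied separately to parabolic, elliptic, hyperbolic, totally nonglancing, and glancing blocks. An $L^2\to L^2$ bound alone cannot be converted by the contour integration of Section \ref{sec-estS-comp} into the claimed $L^2_x$ and $L^\infty_x$ time decay.

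Second, the $\epsilon$ in the rate $(1+t)^{-\frac{d-2}{4}-\epsilon}$ is a gain, not a loss: it does not come from a Gearhart--Pr\"uss-type slowdown (the paper never invokes that framework) but from the refined resolvent bounds \eqref{estZ-refined}, which improve the naive $\rho^{-1}$ and $\rho^{-3/2}$ bounds by a factor $\rho^{\epsilon}$. Producing this factor is the paper's main technical point and is exactly where (H4$'$) enters: for glancing blocks one splits into the regime $\sigma\lesssim\rho^\epsilon$ (handled by estimating $\partial_{x_1}u_k$ componentwise and interpolating via Sobolev embedding) and the regime $\sigma\gtrsim\rho^\epsilon$ (handled by diagonalizing the Jordan-type block, which requires $q_{\nu_k}\neq 0$, i.e.\ (H4$'$)). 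Your proposal contains no mechanism for generating this $\rho^{\epsilon}$ improvement, so even granting your symmetrizer estimates you would at best recover rates without the $\epsilon$, as in Theorem \ref{theo-stabH4}, rather than the $\epsilon$-improved rates asserted in Theorem \ref{theo-lin}; the latter are what make the nonlinear iteration close in $d=3$ (Remark \ref{rem-est-refined}).
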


\begin{theorem}[Nonlinear stability]\label{theo-nonlin}
Assuming (A1)-(A3), (H0)-(H2), (H3$'$), (H4$'$), (B), and (D), we
obtain the asymptotic $L^1\cap H^s \rightarrow L^p \cap H^s$
stability in dimensions $d\ge 3$, for $s\ge s(d)$ as defined in
(H0), and any $2\le p\le \infty$, with rates of decay
\begin{equation}\begin{aligned} &|\tilde U(t)-\bU|_{L^p}\le C
(1+t)^{-\frac{d-1}{2}(1-1/p) + \frac1{2p}-\epsilon} |U_0|_{L^1\cap
H^s}\\&|\tilde U(t)-\bU|_{H^s}\le C (1+t)^{-\frac{d-2}4-\epsilon}
|U_0|_{L^1\cap H^s},
\end{aligned}\end{equation}for some $\epsilon>0$,
provided that the initial perturbations $U_0:=\tilde U_0 - \bU$ are
sufficiently small in $L^1 \cap H^s$ and  zero boundary
perturbations.
\end{theorem}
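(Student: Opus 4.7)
The plan is to reduce Theorem \ref{theo-nonlin} to Theorem \ref{theo-lin} plus a high-frequency energy estimate via a standard Duhamel iteration and bootstrap.

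First, I would introduce the perturbation $U := \tilde U - \bU$ and derive its equation by expanding \eqref{sys} about the profile. This gives
\begin{equation*}
U_t - LU = \mathcal{N}(U),
\end{equation*}
where $L$ is the linearized operator about $\bU$ whose associated Evans function satisfies (D), and
\begin{equation*}
\mathcal{N}(U) = \cO(|U|\,|U_{x}| + |U_x|^2 + |U|^2)
\end{equation*}
collects the quasilinear remainder terms, with zero boundary data in $\tilde W$-coordinates by (B). Duhamel's principle then gives
\begin{equation*}
U(t) = S(t)U_0 + \int_0^t S(t-s)\mathcal{N}(U(s))\,ds,
\end{equation*}
where $S(t)$ is the linear solution operator bounded by Theorem \ref{theo-lin}.

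Next, since Theorem \ref{theo-lin} furnishes $L^1\cap H^{[(d-1)/2]+2}\to L^p$ decay but no decay of the top-order norm $|U|_{H^s}$, I would establish an auxiliary Friedrichs/Kawashima-type energy inequality of the form
\begin{equation*}
|U(t)|_{H^s}^2 \le Ce^{-\theta t}|U_0|_{H^s}^2 + C\int_0^t e^{-\theta(t-s)}\bigl(|U(s)|_{L^2}^2 + |U(s)|_{L^\infty}|U(s)|_{H^s}^2\bigr)ds,
\end{equation*}
obtained by multiplying \eqref{symmetric} by a symmetrizer compatible with (A1)--(A3), integrating by parts on the quarter-space, and absorbing boundary contributions using the sign of $\tilde A^1_{11}$ in (H1). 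Combined with the $L^2$ decay from Theorem \ref{theo-lin}, this yields decay of $|U|_{H^s}$ at the same rate $(1+t)^{-(d-2)/4-\epsilon}$, and by Sobolev embedding controls $|U_x|_{L^\infty}$ via a fractional interpolation $|U_x|_{L^\infty}\lesssim |U|_{L^\infty}^{\alpha}|U|_{H^s}^{1-\alpha}$ for $s$ sufficiently large as required in (H0).

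The heart of the argument is a continuous induction: define
\begin{equation*}
\zeta(t) := \sup_{0\le s\le t} \Bigl[(1+s)^{\frac{d-1}{2}(1-1/p)-\frac{1}{2p}+\epsilon}|U(s)|_{L^p} + (1+s)^{\frac{d-2}{4}+\epsilon}|U(s)|_{H^s}\Bigr],
\end{equation*}
taking the sup over $2\le p\le \infty$, and use the Duhamel formula, the linear estimates of Theorem \ref{theo-lin}, and the energy inequality above to prove
\begin{equation*}
\zeta(t) \le C\,E_0 + C\,\zeta(t)^2, \qquad E_0:=|U_0|_{L^1\cap H^s}.
\end{equation*}
The quadratic bound closes by a continuity argument provided $E_0$ is sufficiently small, yielding the asserted decay rates.

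The main obstacle is the critical dimension $d=3$, where the $L^2$ decay is only $(1+t)^{-1/4-\epsilon}$ and the $L^\infty$ decay is $(1+t)^{-1/2-\epsilon}$, so that the running integral $\int_0^t (1+t-s)^{-\mu}(1+s)^{-1-2\epsilon}\,|\mathcal N(U)|_{L^1\cap H^{[(d-1)/2]+2}}\,ds$ is only marginally convergent; here the small extra gain $-\epsilon$ built into Theorem \ref{theo-lin} is exactly what makes the iteration close. A secondary technical point is handling the boundary contributions in the energy estimate — since the outflow case degenerates on the hyperbolic block at $x_1=0$, one must perform the symmetrizer estimate in $\tilde W$-coordinates and exploit the sign of $\tilde A^1_{11}$ from (H1) to dominate the boundary terms by interior dissipation plus data, so that no additional boundary trace norm appears on the right-hand side.
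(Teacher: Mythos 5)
Your overall architecture (perturbation equation, Duhamel, energy estimate for the top norm, continuous induction on a weighted sup $\zeta(t)$) matches the paper's, but there is a decisive gap in the critical dimension $d=3$: your scheme tries to close using only the $L^1\cap H^{[(d-1)/2]+2}\to L^p$ bounds of Theorem \ref{theo-lin}, and these are not enough. First, the nonlinear source is in divergence form, $U_t-LU=\sum_j\partial_{x_j}Q^j(U,U_x)$ with $Q^j=\cO(|U||U_x|+|U|^2)$; you write it as an undifferentiated $\mathcal N(U)$. Without moving the derivative onto the solution operator one gets, for $d=3$, the integral $\int_0^t(1+t-s)^{-1/4-\epsilon}(1+s)^{-1/2-2\epsilon}\,ds\sim(1+t)^{1/4-3\epsilon}$, which grows, so the iteration does not close. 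Second, and more seriously, even if you invoke a derivative gain $(1+t-s)^{-1/2}$, such a gain is \emph{not} uniformly available on the half-space: the pointwise bound \eqref{ptbounds} shows that $\partial_{y_1}G^0_\lambda$ carries a non-decaying boundary contribution $e^{-\theta|y_1|}$ with no factor of $\rho$. This is precisely why Proposition \ref{prop-estS} bounds $\cS_1(t)\partial_{x_1}f$ by a sum of two terms, one with the $(1+t)^{-1/2}$ gain against $|f|_{L^1_x}$ and one \emph{without} the gain against the mixed norm $|f|_{L^{1,\infty}_{\tx,x_1}}$, and why the paper's $\zeta(t)$ must track $(|U|+|U_x|)_{L^{2,\infty}_{\tx,x_1}}$ (which decays at the faster rate $(1+s)^{-(d-1)/4-\epsilon}$) in addition to $|U|_{L^2}$ and $|U|_{L^\infty}$. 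Your $\zeta$ contains no mixed norms and your linear input (Theorem \ref{theo-lin}) provides no mixed-norm or derivative bounds, so the boundary part of the Duhamel integral cannot be controlled in your framework.

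Two smaller points. The reduction to ``Theorem \ref{theo-lin} plus an energy estimate'' also conflates the low- and high-frequency parts: the high-frequency operator $\cS_2(t)\partial_{x_j}$ gains no time decay from the derivative and instead costs three derivatives of the source ($H^{|\gamma|+3}$ in \eqref{boundcS2}), which is where the requirement $s\ge s(d)$ enters; this bookkeeping is absent from your outline. Finally, the auxiliary $H^s$ inequality you propose (with the cubic term $|U|_{L^\infty}|U|_{H^s}^2$) is serviceable, but the paper uses the cleaner damping estimate $|U(t)|_{H^s}^2\le Ce^{-\theta t}|U_0|_{H^s}^2+C\int_0^te^{-\theta(t-\tau)}|U(\tau)|_{L^2}^2\,d\tau$ from \cite{NZ2}, which slaves $|U|_{H^s}$ directly to $|U|_{L^2}$; your version would require an additional absorption step in the bootstrap. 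The correct repair is to replace Theorem \ref{theo-lin} as linear input by the full solution-operator bounds \eqref{boundcS1}--\eqref{boundcS2} and to enlarge $\zeta$ by the $L^{2,\infty}_{\tx,x_1}$ components, exactly as in \eqref{zeta}.
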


\begin{remark} \textup{As will be seen in the proof, the assumption
(H4$'$) can be dropped in the case $d\ge 4$, though 
we then lose the factor $t^{-\epsilon}$ in the decay rate.}
\end{remark}

Our final main result gives the stability 
for the two--dimensional case that is not covered by the above
theorems. We remark here that as shown in \cite{Z2,Z3}, Hypothesis
(H4) is automatically satisfied in dimensions $d = 1, 2$ and in any
dimension for rotationally invariant problems. Thus, in treating the
two--dimensional case, we assume this hypothesis without making any
further restriction on structure of the systems. Also since the
proof does not depend on dimension $d$, we state the theorem in a
general form as follows.

\begin{theorem}[Two-dimensional case or cases with (H4)]\label{theo-stabH4}
Assume (A1)-(A3), (H0)-(H2), (H3$'$), (H4), (B), and (D). We obtain
asymptotic $L^1\cap H^s \rightarrow L^p \cap H^s$ stability of $\bar
U$ as a solution of \eqref{sys} in dimension $d\ge 2$, for $s\ge
s(d)$ as defined in (H0), and any $2\le p\le \infty$, with rates of
decay
\begin{equation}\begin{aligned} &|\tilde U(t)-\bU|_{L^p}\le C
(1+t)^{-\frac{d}{2}(1-1/p)+1/2p} |U_0|_{L^1\cap H^s}\\&|\tilde
U(t)-\bU|_{H^s}\le C (1+t)^{-\frac{d-1}4} |U_0|_{L^1\cap H^s},
\end{aligned}\end{equation}provided that the initial perturbations $U_0:=\tilde U_0 - \bU$ are
sufficiently small in $L^1 \cap H^s$ and zero boundary
perturbations. Similar statement holds for linearized stability.
\end{theorem}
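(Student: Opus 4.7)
The plan is to follow the same overall framework used for Theorems \ref{theo-lin} and \ref{theo-nonlin}, replacing the low-frequency analysis built on hypothesis (H4$'$) by the cleaner analysis that hypothesis (H4) permits; since the bounds do not depend on $d$, the same argument covers the two-dimensional case (where (H4) is automatic) and any higher dimension where (H4) is assumed.

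First I would linearize \eqref{sys} about $\bU$ to obtain $U_t = L U$ on $\RR_+^d$ with Dirichlet-type boundary conditions from (B). Fourier transforming in $\tx$ and Laplace transforming in $t$ produces a family of boundary-value ODEs in $x_1$ parametrized by $(\txi,\lambda)$, and the uniform Evans condition (D) together with the high-frequency Proposition 3.8 of \cite{GMWZ5} ensures the associated resolvent is well-defined on $\Re\lambda\ge 0$. I would split the linear semigroup as $S(t)=S_1(t)+S_2(t)$ using a smooth cutoff $\chi(|\txi|^2+|\lambda|^2\le r^2)$ for some small $r>0$. The high-frequency piece $S_2(t)$ is controlled exactly as in the earlier theorems by the symmetrizer/energy estimates of \cite{GMWZ5,GMWZ6} and yields decay at rates faster than those asserted, thus contributing no obstruction.

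For the low-frequency piece $S_1(t)$, I would apply the Kreiss-symmetrizer method developed in the main body of this paper (in place of the pointwise resolvent-kernel construction of \cite{Z2,Z3,NZ2}) to obtain the needed resolvent estimates. The crucial point is that under (H4) the branch set of the characteristic roots is a finite union of smooth curves $\tau=\eta_q^+(\txi)$ on which the branching eigenvalue has constant multiplicity $s_q\ge 2$; this regular structure allows a block-diagonal symmetrizer to be constructed on a neighborhood of each branch curve with uniform bounds, with no remaining conical singularities. As a consequence, inverting the Laplace-Fourier transform against these bounds gives the pointwise Green-function estimates at the sharp rates
\begin{equation}
|S(t)U_0|_{L^p}\le C(1+t)^{-\frac{d}{2}(1-1/p)+\frac{1}{2p}}|U_0|_{L^1\cap H^s},\qquad |S(t)U_0|_{H^s}\le C(1+t)^{-\frac{d-1}{4}}|U_0|_{L^1\cap H^s},
\end{equation}
without the $(1+t)^{-\epsilon}$ loss that appears in Theorems \ref{theo-lin}-\ref{theo-nonlin}. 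This establishes the linearized statement.

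For nonlinear stability, I would write $U=\tilde U-\bU$ as the solution of $U_t=LU+\na\cdot N(U)$ with $N(U)=\cO(|U|^2+|\na U|^2)$ and use the Duhamel representation
\begin{equation}
U(t)=S(t)U_0+\int_0^t S(t-s)\,\na\cdot N(U(s))\,ds,
\end{equation}
then close a bootstrap on the ansatz $|U(t)|_{L^p}\le C\zeta(t)(1+t)^{-\frac{d}{2}(1-1/p)+\frac{1}{2p}}$ and $|U(t)|_{H^s}\le C\zeta(t)(1+t)^{-(d-1)/4}$. The nonlinear term is handled by Sobolev embedding and the linear Green-function bounds applied to $\na\cdot N(U)$; because the sharp rates already make $\int_0^t(1+t-s)^{-(d-1)/2-1/2}(1+s)^{-(d-1)/2}\,ds$ integrable for $d\ge 2$, the iteration closes with $\zeta(t)\le$ const and yields the asserted nonlinear rates.

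The main obstacle is the low-frequency resolvent analysis: one must build a Kreiss symmetrizer that is uniform through the branch points of the characteristic roots and adapted to the parabolic block structure of the linearization. Hypothesis (H4) is precisely what reduces this construction to a finite collection of Ralston/Métivier-type blocks of constant multiplicity, avoiding the irregular branching (and the accompanying $t^{-\epsilon}$ loss) that forces one to use (H4$'$) and the delicate three-dimensional argument in Theorems \ref{theo-lin}-\ref{theo-nonlin}.
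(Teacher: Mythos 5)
Your proposal has a genuine gap: it never engages with the actual new difficulty of this theorem, which is the relaxed hypothesis (H3$'$). Under (H3$'$) the eigenvalues of $\sum_j\xi_j dF^j(U_+)$ need not be semisimple of constant multiplicity --- they may be totally nonglancing (case (iv) of Proposition \ref{prop-blockst}), and this is precisely what lets the result cover MHD layers. The paper's proof of Theorem \ref{theo-stabH4} is exactly the opposite of a rebuild from scratch: it observes that the only assumption differing from \cite{NZ2} is (H3$'$), which enters only at low frequency, and therefore the whole proof reduces to extending the low-frequency resolvent bound (Proposition 3.3 of \cite{NZ2}, restated here as Proposition \ref{prop-resLFH5}) to the totally nonglancing blocks. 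That extension is carried out with the GMWZ-type definite symmetrizers of Subsection \ref{subsec-totalnongl}, giving \eqref{totnongl}, with the boundary term absorbed via the $L^2$ stability estimate \eqref{max-est}; the $\beta=1$ case uses the Kreiss--Kreiss trick, and then the nonlinear iteration is quoted word for word from \cite{NZ2}. Your plan of ``reducing everything to Ralston/M\'etivier blocks of constant multiplicity via (H4)'' simply does not apply to the totally nonglancing modes, so as written it only proves the theorem under (H3), not (H3$'$).

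A secondary inaccuracy: your claim that (H4) yields a symmetrizer ``with uniform bounds, with no remaining conical singularities'' and hence directly the sharp rates misdescribes the mechanism. The low-frequency resolvent bound under (H4) is \emph{not} uniform; it carries the diagonalization-error factor $\gamma_2 = 1+\sum_{j,\pm}[\rho^{-1}|\I\lambda-\eta_j^\pm(\txi)|+\rho]^{1/s_j-1}$, which blows up as $\I\lambda$ approaches the glancing curves $\eta_j^\pm(\txi)$ since $s_j\ge 2$. What (H4) buys is that this blow-up is confined to finitely many smooth curves with a quantified rate, so that it is integrable when one inverts the Laplace--Fourier transform along $\Gamma^{\txi}$; that integrability, not uniformity of the symmetrizer, is what produces the sharp decay rates without the $t^{-\epsilon}$ loss. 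If you want to salvage your route, you must (i) add the totally nonglancing symmetrizer estimates and the absorption of $|u_{H_{t-}}(0)|^2$ via \eqref{max-est}, and (ii) replace the ``uniform symmetrizer'' claim by the $\gamma_2$-weighted bound and verify its integrability in the $d\lambda\, d\txi$ integration.
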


%

\begin{remark}\textup{ The same results can be also obtained for nonzero
boundary perturbations as treated in \cite{NZ2}. In fact, in
\cite{NZ2}, though a bit of tricky, it has been already shown that
estimates on solution operator (see Proposition \ref{prop-estS}) for
homogenous boundary conditions are enough to treat nonzero boundary
perturbations. Thus for sake of simplicity, we only treat zero
boundary perturbations in the current paper.}
\end{remark}

Combining Theorems \ref{theo-lin}, \ref{theo-nonlin},
\ref{theo-stabH4} and Proposition \ref{specstab}, we obtain the
following small-amplitude stability result.

\begin{cor}\label{smallamp}
Assuming (A1)-(A3), (H0)-(H2), (H3$'$), (B) for some fixed endstate
(or compact set of endstates) $U_+$, boundary layers with amplitude
$$
\|\bar U-U_+\|_{L^\infty[0,+\infty]}
$$
sufficiently small are linearly and nonlinearly stable in the sense
of Theorems \ref{theo-lin}, \ref{theo-nonlin}, and
\ref{theo-stabH4}.
\end{cor}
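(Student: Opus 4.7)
The proof is essentially a direct combination of the cited results, so my plan is to verify that the hypotheses of each main theorem reduce to those of the corollary in the small-amplitude regime, and then invoke the theorems dimension by dimension.

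First, I would note that the spectral stability condition (D) is \emph{not} assumed in the corollary, so it must be supplied. This is exactly the content of Proposition \ref{specstab}: under (A1)--(A3), (H0)--(H2), (H3$'$), (B), and the assumption that $\|\bar U-U_+\|_{L^\infty[0,+\infty)}$ is sufficiently small, the uniform Evans condition (D) holds on every bounded subset of $\{\Re\lambda\ge0,\tilde\xi\in\RR^{d-1}\}\setminus\{0\}$. Since (D) is stated with respect to the fixed endstate $U_+$ (or uniformly over a compact set of endstates), this step furnishes the spectral input needed by Theorems \ref{theo-lin}--\ref{theo-stabH4} uniformly in the small-amplitude parameter.

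Next, I would split into cases according to the dimension. For $d=2$, the structural Hypothesis (H4) on branch points of $(\tilde A^1)^{-1}(i\tau\tilde A^0+\sum_{j\ne1}i\xi_j\tilde A^j)_+$ is automatic, as recorded in the remark preceding Theorem \ref{theo-stabH4} and proved in \cite{Z2,Z3}; hence all hypotheses of Theorem \ref{theo-stabH4} are in force, and it yields nonlinear (and linearized) stability with the stated $L^p$ and $H^s$ rates. For $d\ge 3$, I would apply Theorems \ref{theo-lin} and \ref{theo-nonlin}: Hypothesis (H4$'$) is a condition on the limiting symbol $\sum_j\xi_j dF^j(U_+)$ at the fixed endstate and is therefore amplitude-independent, so it may be assumed as part of the ambient data of the endstate (generic in view of the discussion around Remark~\ref{rem-H4'}). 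In the borderline dimension $d\ge 4$, the remark following Theorem \ref{theo-nonlin} allows (H4$'$) to be dropped altogether, at the cost of the extra $t^{-\epsilon}$ factor, so no additional assumption is needed there. In the rotationally invariant case, one may instead invoke Theorem \ref{theo-stabH4} since (H4) is then automatic in any dimension.

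The only non-routine point in this combination is ensuring uniformity of the spectral bound $\theta(C)$ from (D) as the amplitude varies: Proposition \ref{specstab} delivers uniformity over a compact set of endstates, which feeds directly into the quantitative constants appearing in the decay rates of Theorems \ref{theo-lin}--\ref{theo-stabH4}. Once this uniformity is noted, the corollary follows immediately by chaining the cited results, and there is no genuine obstacle in the argument itself; the essential work has already been carried out in the proofs of the three main theorems and of Proposition \ref{specstab}.
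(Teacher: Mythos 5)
Your proposal is correct and matches the paper's (one-line) proof, which simply combines Proposition \ref{specstab} with Theorems \ref{theo-lin}, \ref{theo-nonlin}, and \ref{theo-stabH4}; your additional bookkeeping about (H4), (H4$'$), and uniformity over compact sets of endstates is consistent with the paper's intent. No gaps.
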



\section{Nonlinear stability}
The linearized equations of \eqref{sys} about the profile $\bar U$ are
\begin{equation}\label{lind}
 U_t = LU :=
\sum_{j,k}( B^{jk} U_{x_k})_{x_j}
- \sum_{j}( A^{j} U)_{x_j}
\end{equation}
with initial data $U(0)=U_0$. Then, we obtain the following
proposition, extending Proposition 3.5 of \cite{NZ2} under our
weaker assumptions.
\begin{proposition}\label{prop-estS} Under the hypotheses of Theorem \ref{theo-nonlin},
the solution operator $\cS(t):=e^{Lt}$ of the linearized equations
may be decomposed into low frequency and high frequency parts (see
below) as $\cS(t) = \cS_1(t) + \cS_2(t)$ satisfying
\begin{equation}\label{boundcS1}
\begin{aligned} |\cS_1(t) \partial_{x_1}^{\beta_1}\partial_{\tx}^{\tilde \beta} f|_{L^2_x} \le& C
(1+t)^{-(d-2)/4-\epsilon/2- |\beta|/2}|f|_{L^1_x}+C
(1+t)^{-(d-2)/4-\epsilon/2}|f|_{L^{1,\infty}_{\tx,x_1}}\\
|\cS_1(t) \partial_{x_1}^{\beta_1}\partial_{\tx}^{\tilde \beta}
f|_{L^{{2,\infty}}_{\tx,x_1}} \le& C (1+t)^{-(d-1)/4 -\epsilon/2-
|\beta|/2}|f|_{L^1_x}+C(1+t)^{-(d-1)/4-\epsilon/2}|f|_{L^{1,\infty}_{\tx,x_1}}\\
|\cS_1(t) \partial_{x_1}^{\beta_1}\partial_{\tx}^{\tilde \beta}
f|_{L^{\infty}_{x}} \le& C (1+t)^{-(d-1)/2-\epsilon/2- |\beta|/2}|f|_{L^1_x}+C
(1+t)^{-(d-1)/2-\epsilon/2}|f|_{L^{1,\infty}_{\tx,x_1}}
\end{aligned}\end{equation}
for some $\epsilon>0$ and $\beta = (\beta_1,\tilde\beta)$ with $\beta_1=0,1$, and
\begin{equation}\label{boundcS2}
\begin{aligned}|\partial_{x_1}^{\gamma_1}\partial_{\tx}^{\tilde\gamma}\cS_2(t)f|_{L^2} &\le C
e^{-\theta_1t}|f|_{H^{|\gamma_1|+|\tilde \gamma|+3}},\end{aligned}\end{equation} for $\gamma = (\gamma_1,\tilde\gamma)$ with $\gamma_1=0,1$.
\end{proposition}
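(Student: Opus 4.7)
The plan is to decompose $\cS(t)=e^{Lt}$ via the inverse Laplace transform and tangential Fourier transform, writing
\[
\cS(t)f \;=\; \frac{1}{(2\pi i)(2\pi)^{d-1}}\int_{\tilde\xi\in\RR^{d-1}} e^{i\tilde\xi\cdot\tx}\int_{\Gamma}e^{\lambda t}(L_{\tilde\xi}-\lambda)^{-1}\widehat f(x_1,\tilde\xi)\,d\lambda\,d\tilde\xi,
\]
where $L_{\tilde\xi}$ is the Fourier-transformed operator and $\Gamma$ is a suitable contour in $\Re\lambda\ge -\eta_0$. I would introduce a smooth cutoff $\chi(\tilde\xi,\lambda)$ supported in $\{|\tilde\xi|^2+|\lambda|\le r_0\}$ for $r_0$ small and split $\cS(t)=\cS_1(t)+\cS_2(t)$ accordingly. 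The high-frequency piece $\cS_2$ then satisfies the required bound \eqref{boundcS2} by a standard argument: the strong spectral stability (D) together with the high-frequency resolvent estimates provided by the symmetrizability hypotheses (A1)--(A3) allows one to push the contour $\Gamma$ to $\Re\lambda=-\eta_1<0$ on the support of $1-\chi$, and one pays three derivatives via the Hausdorff-Young step from $L^1_\lambda$ in $\lambda$ on $|\lambda|$-large regions together with the parabolic smoothing encoded in $L$.

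For the low-frequency piece $\cS_1$, which is the heart of the proposition, I would replace the pointwise Green-function approach of \cite{NZ2} by the method of Kreiss' symmetrizers, following the scheme of \cite{N1} for the shock case. First, I would set up the first-order ODE system obtained by recasting the resolvent equation $(L_{\tilde\xi}-\lambda)U=F$ in $x_1$, introducing the standard rescaling $\lambda=\rho^2\hat\lambda$, $\tilde\xi=\rho\hat{\tilde\xi}$ to resolve the low-frequency degeneracy and yielding a uniformly elliptic-hyperbolic spectral problem parametrized by $(\rho,\hat\lambda,\hat{\tilde\xi})$ on the unit hemisphere $\Re\hat\lambda\ge 0$. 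Using (H3$'$) and (H4$'$) one constructs Kreiss-type symmetrizers that separate the stable manifold at $x_1=+\infty$ from the boundary modes; combined with the Evans condition (D), this gives resolvent bounds of the form $|(L_{\tilde\xi}-\lambda)^{-1}F|\le C\rho^{-1}|F|$ in appropriate norms, sharp up to the expected algebraic loss. Inverting the Laplace-Fourier transform then produces oscillatory integrals in $(\tilde\xi,\tau)$ to which one applies stationary phase / Riemann saddle-point estimates, yielding the bulk algebraic decay $(1+t)^{-(d-k)/4-|\beta|/2}$ for the $L^2$, $L^{2,\infty}$ and $L^\infty$ norms respectively.

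The decisive difficulty, and where I expect to spend the most work, is extracting the additional factor $t^{-\eps/2}$ and simultaneously dispensing with hypothesis (H4). The branch-point set of the symbol $(\tilde A^1)^{-1}(i\tau\tilde A^0+\sum_{j\ne1}i\xi_j\tilde A^j)$ need no longer be a union of smooth curves, so the stationary phase cannot be applied directly on a smooth stratification. The idea is to exploit (H4$'$), which guarantees that glancing points of the hyperbolic symbol $\sum_j\xi_j dF^j(U_+)$ are isolated on each constant-multiplicity sheet because $\nabla_{\tilde\xi}\lambda_k\ne 0$ whenever $\partial_{\xi_1}\lambda_k=0$. Near such glancing points I would perform a local block diagonalization of Kreiss type and a careful Airy-type analysis of the oscillatory kernel, while away from the glancing set the symbol is smooth in the relevant variables and standard stationary phase applies; the $t^{-\eps}$ comes from a measure-type bound on the size of the neighborhood one needs to carve around the (lower-dimensional) singular stratum. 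In dimension $d\ge 4$ the extra dimension gives enough room to do without (H4$'$), at the cost of losing the $t^{-\eps}$, as noted in the remark; in dimension $d=3$ the hypothesis is genuinely used, and in $d=2$ the (H4) case is handled by the argument of \cite{NZ2} which we retain.

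Finally, having obtained the stated $L^p$-type bounds for $\cS_1$ on the homogeneous (zero boundary data) problem, together with the corresponding derivative bounds (the $\partial_{x_1}$ and $\partial_{\tx}$ falling on $f$ translating into extra $\rho$-factors in the symbol, hence the exponents $|\beta|/2$), one reads off \eqref{boundcS1} by interpolation between the $L^2$ and $L^\infty$ estimates, and \eqref{boundcS2} follows from the high-frequency contour-shift discussion above. This completes the proof scheme.
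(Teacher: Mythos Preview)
Your overall architecture---Laplace--Fourier representation, low/high frequency split via a cutoff in $(\tilde\xi,\lambda)$, Kreiss-type symmetrizers for the low-frequency resolvent, contour shift for the high-frequency part---matches the paper. The essential divergence is in \emph{where} the extra $t^{-\epsilon/2}$ is produced. You propose a resolvent bound of order $\rho^{-1}$ and then an $\epsilon$-gain at the time-integration stage by stationary phase and Airy-type analysis near glancing points. The paper does the opposite: it upgrades the resolvent bound itself to
\[
|(L_{\tilde\xi}-\lambda)^{-1}\partial_{x_1}^\beta f|_{L^p(x_1)}\le C\rho^{-1-1/p+\epsilon}\bigl[\rho^\beta |f|_{L^1(x_1)}+|f|_{L^\infty(x_1)}\bigr],
\]
and then recovers the time decay by an elementary Parseval-plus-Gaussian computation on the parabolic contour $\Gamma^{\tilde\xi}=\{\Re\lambda=-\theta_1(|\tilde\xi|^2+|\Im\lambda|^2)\}$; no stationary phase enters. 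The $\rho^\epsilon$ improvement is obtained by a refined analysis of the glancing blocks in the Kreiss block structure: one splits according to whether the transverse distance $\sigma=|\hat{\tilde\xi}-\underline{\hat{\tilde\xi}}|$ to the base glancing point satisfies $\sigma\lesssim\rho^\epsilon$ or $\sigma\gtrsim\rho^\epsilon$. In the first regime the nilpotent part of $Q_k$ dominates and one bounds $\partial_{x_1}u_k$ in $L^2$ and bootstraps via Sobolev (this is where the $|f|_{L^\infty(x_1)}$, and hence the $|f|_{L^{1,\infty}_{\tilde x,x_1}}$ term in \eqref{boundcS1}, originates); in the second regime (H4$'$) guarantees the lower-left entry $q_{\nu_k}\ne 0$, the glancing block can be diagonalized, and the loss from $|T_{H_g}^{-1}|$ is only $\rho^{-C\epsilon}$.

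As written, your stationary-phase route is a sketch with a real gap: you would have to run an Airy-type analysis on the full resolvent kernel (not a model symbol), uniformly in $x_1$ and in the conjugation to the limiting system at $+\infty$, and a ``measure bound on the glancing neighborhood'' does not by itself yield $t^{-\epsilon}$ without quantitative control of the resolvent \emph{inside} that neighborhood---which is precisely the refined bound the paper supplies. Your proposal also does not explain the appearance of the mixed norm $|f|_{L^{1,\infty}_{\tilde x,x_1}}$ on the right-hand side of \eqref{boundcS1}. The paper's choice to push the $\epsilon$-gain into the resolvent estimate makes the subsequent time integration trivial and sidesteps all of these issues.
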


We shall give a proof of Proposition \ref{prop-estS} in Section
\ref{sec-estS}. For the rest of this section, we give a rather
straightforward proof of the first two main theorems using estimates
of the solution operator stated in Proposition \ref{prop-estS},
following nonlinear arguments of \cite{Z3,NZ2}.

\subsection{Proof of linearized stability} Applying estimates on low- and high-frequency operators $\cS_1(t)$
and $\cS_2(t)$ obtained in Proposition \ref{prop-estS}, we obtain
\begin{equation} \begin{aligned} |U(t)|_{L^2} &\le
|\cS_1(t)U_0|_{L^2} + |\cS_2(t)U_0|_{L^2}\\&\le
C(1+t)^{-\frac{d-2}{4}-\frac
\epsilon2}[|U_0|_{L^1}+|U_0|_{L^{1,\infty}_{\tx,x_1}}] + Ce^{-\eta
t}|U_0|_{H^3}\\&\le C(1+t)^{-\frac{d-2}{4}-\frac
\epsilon2}|U_0|_{L^1\cap H^3}
\end{aligned}\end{equation}
and (together with Sobolev embedding) \begin{equation}
\begin{aligned} |U(t)|_{L^\infty} &\le |\cS_1(t)U_0|_{L^\infty} +
|\cS_2(t)U_0|_{L^\infty}\\&\le C(1+t)^{-\frac{d-1}{2}-\frac
\epsilon2}[|U_0|_{L^1} +|U_0|_{L^{1,\infty}_{\tx,x_1}}]+
C|\cS_2(t)U_0|_{H^{[(d-1)/2]+2}}\\&\le C(1+t)^{-\frac{d-1}{2}-\frac
\epsilon2}[|U_0|_{L^1} +|U_0|_{L^{1,\infty}_{\tx,x_1}}]+ Ce^{-\eta
t}|U_0|_{H^{[(d-1)/2]+2}}\\&\le C(1+t)^{-\frac{d-1}{2}-\frac
\epsilon2}|U_0|_{L^1\cap H^{[(d-1)/2]+2}}.
\end{aligned}\end{equation}
These prove the bounds as stated in the theorem for $p=2$ and
$p=\infty$. For $2<p<\infty$, we use the interpolation inequality
between $L^2$ and $L^\infty$.

\subsection{Proof of nonlinear stability} Defining the perturbation variable $U:= \tilde U - \bU$, we obtain
the nonlinear perturbation equations
\begin{equation}\label{per-eqs} U_t - LU = \sum_j
Q^j(U,U_x)_{x_j},\end{equation} where
\begin{equation}\label{newqbounds}
\begin{aligned}
Q^j(U,U_x)&=\cO(|U||U_x|+|U|^2)\\
Q^j(U,U_x)_{x_j}&= \cO(|U||U_{x}|+|U||U_{xx}|+|U_x|^2)\\
Q^j(U,U_x)_{x_jx_k}&=
\cO(|U||U_{xx}|+|U_x||U_{xx}|+|U_x|^2+|U||U_{xxx}|)
\end{aligned}
\end{equation}
so long as $|U|$ remains bounded.

Applying the Duhamel principle to \eqref{per-eqs}, we obtain
\begin{equation}\label{Duhamel}
\begin{aligned}
  U(x,t)=& \cS(t) U_0 + \int_0^t \cS(t-s)\sum_j
\partial_{x_j}Q^j(U,U_x)ds
\end{aligned}
\end{equation} where $U(x,0) = U_0(x)$.

\begin{proof}[Proof of Theorem \ref{theo-nonlin}]
Define \begin{equation}\label{zeta} \begin{aligned}\zeta(t):=\sup_s
&\Big(|U(s)|_{L^2_x}(1+s)^{\frac{d-2}4+\epsilon}+|U(s)|_{L^\infty_x}(1+s)^{\frac
{d-1}2+\epsilon}
\\&+(|U(s)|+|U_x(s)|)_{L^{2,\infty}_{\tx,x_1}}(1+s)^{\frac{d-1}4+\epsilon}
\Big).\end{aligned}
\end{equation}

 We shall prove here that for all $t\ge
0$ for which a solution exists with $\zeta(t)$ uniformly bounded by
some fixed, sufficiently small constant, there holds
\begin{equation}\label{zeta-est}
\zeta(t) \le C(|U_0|_{L^1\cap H^s}+\zeta(t)^2) .\end{equation}

This bound together with continuity of $\zeta(t)$ implies that
\begin{equation}\label{zeta-est1} \zeta(t) \le 2C|U_0|_{L^1\cap H^s}\end{equation}
for $t\ge0$, provided that $|U_0|_{L^1\cap H^s}< 1/4C^2$. This would
complete the proof of the bounds as claimed in the theorem, and thus
give the main theorem.

By standard short-time theory/local well-posedness in $H^s$, and the
standard principle of continuation, there exists a solution $U\in
H^s$ on the open time-interval for which $|U|_{H^s}$ remains
bounded, and on this interval $\zeta(t)$ is well-defined and
continuous. Now, let $[0,T)$ be the maximal interval on which
$|U|_{H^s}$ remains strictly bounded by some fixed, sufficiently
small constant $\delta>0$. Recalling the following energy estimate
(see Proposition 4.1 of \cite{NZ2}) and the Sobolev embeding
inequality $|U|_{W^{2,\infty}}\le C|U|_{H^s}$, we have
\begin{equation}\label{Hs}\begin{aligned}|U(t)|_{H^s}^2 &\le Ce^{-\theta t}|U_0|_{H^s}^2
+ C \int_0^t e^{-\theta(t-\tau)}|U(\tau)|_{L^2}^2d\tau\\&\le
C(|U_0|_{H^s}^2 +\zeta(t)^2)(1+t)^{-(d-2)/2-2\epsilon}.
\end{aligned}\end{equation}
and so the solution continues so long as $\zeta$ remains small, with
bound \eqref{zeta-est1}, yielding existence and the claimed bounds.

Thus, it remains to prove the claim \eqref{zeta-est}. First by
\eqref{Duhamel}, we obtain
\begin{equation}\begin{aligned} |U(t)|_{L^2}\le& |\cS(t)U_0|_{L^2} +
\int_0^t|\cS_1(t-s)\partial_{x_j}Q^j(s)|_{L^2}ds \\&+ \int_0^t
|\cS_2(t-s)\partial_{x_j}Q^j(s)|_{L^2}ds
\end{aligned}\end{equation}
where $|\cS(t) U_0|_{L^2}\le C
(1+t)^{-\frac{d-1}{4}-\epsilon}|U_0|_{L^1\cap H^3}$ and
$$\begin{aligned}
\int_0^t|&\cS_1(t-s)\partial_{x_j}Q^j(s)|_{L^2}ds
\\&\le C\int_0^t (1+t-s)^{-\frac{d-2}{4}-\frac12-\epsilon}|Q^j(s)|_{L^1} + (1+s)^{-\frac{d-2}4-\epsilon}|Q^j(s)|_{L^{1,\infty}_{\tx,x_1}} ds
\\&\le C\int_0^t (1+t-s)^{-\frac{d-2}{4}-\frac12-\epsilon}|U|_{H^1}^2
+
(1+t-s)^{-\frac{d-2}4-\epsilon}\Big(|U|^2_{L^{2,\infty}_{\tx,x_1}}+|U_x|^2_{L^{2,\infty}_{\tx,x_1}}\Big)ds\\&\le
C(|U_0|_{H^s}^2+\zeta(t)^2)\int_0^t \Big[
(1+t-s)^{-\frac{d-2}{4}-\frac12-\epsilon}(1+s)^{-\frac{d-2}{2}-2\epsilon}
\\&\quad\qquad+ (1+t-s)^{-\frac{d-2}4-\epsilon}(1+s)^{-\frac{d-1}2-2\epsilon}\Big]ds\\&\le
C(1+t)^{-\frac{d-2}{4}-\epsilon}(|U_0|_{H^s}^2+\zeta(t)^2)
\end{aligned}$$
and
$$\begin{aligned}\int_0^t
|&\cS_2(t-s)\partial_{x_j}Q^j(s)|_{L^2}ds\\&\le \int_0^t
e^{-\theta(t-s)}|\partial_{x_j}Q^j(s)|_{H^3}ds
\\&\le C\int_0^t
e^{-\theta(t-s)}|U|_{H^s}^2ds
\\&\le C(|U_0|_{H^s}^2+\zeta(t)^2) \int_0^t
e^{-\theta(t-s)}(1+s)^{-\frac{d-2}{2}-2\epsilon}ds\\&\le
C(1+t)^{-\frac{d-2}{2}-2\epsilon}(|U_0|_{H^s}^2+\zeta(t)^2).
\end{aligned}$$

Therefore, combining these above estimates yields \begin{equation}
|U(t)|_{L^2}(1+t)^{\frac{d-2}{4}+\epsilon} \le C(|U_0|_{L^1\cap
H^s}+\zeta(t)^2) .\end{equation}

Similarly, we can obtain estimates for other norms of $U$ in
definition of $\zeta$, and finish the proof of claim
\eqref{zeta-est} and thus the main theorem.\end{proof}

\begin{remark}\label{rem-est-refined} \textup{The decaying factor $t^{-\epsilon}$ is crucial in above analysis when
$d=3$. In fact, the main difficulty here comparing with the shock
cases in \cite{N1} is to obtain a refined bound of solutions in
$L^\infty$. See further discussion in Section \ref{sec-estS} below.}
\end{remark}

\section{Linearized estimates}\label{sec-estS}
In this section, we shall give a proof of Proposition
\ref{prop-estS} or bounds on $\cS_1(t)$ and $\cS_2(t)$, where we use
the same decomposition of solution operator $\cS(t) =
\cS_1(t)+\cS_2(t)$ as in \cite{Z2,Z3}.

\subsection{High--frequency estimate} We first observe that our relaxed
Hypothesis (H3$'$) and the dropped Hypothesis (H4) only play a role
in low--frequency regimes. Thus, in course of obtaining the
high--frequency estimate \eqref{boundcS2}, we make here the same
assumptions as were made in \cite{NZ2}, and therefore the same
estimate remains valid as claimed in \eqref{boundcS2} under our
current assumptions. We omit to repeat its proof here, and refer the
reader to the paper \cite{NZ2}, Proposition 3.6.

In the remaining of this section, we shall focus on proving the
bounds on low-frequency part $\cS_1(t)$ of linearized solution
operator.

Taking the Fourier transform in $\tilde x:=(x_2,\dots,x_d)$ of
linearized equation \eqref{lind}, we obtain a family of eigenvalue
ODE
\begin{equation}\label{eigensys}\begin{aligned}
\lambda U=L_{\tilde \xi }U:=\overbrace{(B_{11}U')'-(A_1U)'}^{L_0U} -
&i\sum_{j\not=1}A_j\xi_jU + i\sum_{j\not=1}B_{j1}\xi_jU'
\\&+i\sum_{k\not=1}(B_{1k}\xi_kU)' -
\sum_{j,k\not=1}B_{jk}\xi_j\xi_k U.
\end{aligned}\end{equation}

\subsection{The GMWZ's $L^2$ stability estimate}\label{GMWZresults} Let $U=(u^I,u^{II})^T$ be a solution of
resolvent equation $(L_\txi-\lambda)U = f$. Following
\cite{Z3,GMWZ6}, consider the variable $W$ as
usual$$W:=\begin{pmatrix}w^{I}\\w^{II}\\w^{II}_{x_1}\end{pmatrix}$$
with $w^I:=A_*u^I, w^{II}:=b_1^{11}u^I + b^{11}_2u^{II}$,
$A_*:=A^1_{11} - A^1_{12}(b_2^{11})^{-1}b_1^{11}$. Then we can write
equations of $W$ as a first order system
\begin{equation}\label{1steqsW} \begin{aligned}\partial_{x_1} W &= \cG(x_1,\lambda,\txi)W+F
\\\Gamma W &=0 \mbox{  on  }x_1=0.\end{aligned}\end{equation}

For small or bounded frequencies $(\lambda,\txi)$, we use the MZ
conjugation lemma (see \cite{MeZ1,MeZ3}). That is, given any
$(\underline{\lambda},\underline{\txi})\in \RR^{d+1}$, there is a
smooth invertible matrix $\Phi(x_1,\lambda,\txi)$ for $x_1\ge 0$ and
$(\lambda,\txi)$ in a small neighborhood of
$(\underline{\lambda},\underline{\txi})$, such that \eqref{1steqsW}
is equivalent to
\begin{equation}\label{1steqsY} \partial_{x_1} Y = \cG_+(\lambda,\txi)Y+\tilde F, \quad \tilde \Gamma(\lambda,\txi)Y= 0\end{equation}
where $\cG_+(\lambda,\txi) := \tilde \cG(+\infty,\lambda,\txi), W =
\Phi Y, \tilde F =\Phi^{-1}F $ and $\tilde \Gamma Y:= \Gamma\Phi Y$.

Next, there are smooth matrices $V(\lambda,\txi)$ such that
\begin{equation} V^{-1}\cG_+ V =
\begin{pmatrix}H&0\\0&P\end{pmatrix}\end{equation} with blocks
$H(\lambda,\txi)$ and $P(\lambda,\txi)$ satisfying the eigenvalues
$\mu$ of $P$ in $\{|\R\mu|\ge c>0\}$ and
$$\begin{aligned}H(\lambda,\txi) &= H_0(\lambda,\txi) +
\cO(\rho^2)\\H_0(\lambda,\txi):&=-(A_{+}^1)^{-1}\Big((i\tau +
\gamma)A^0_+ + \sum_{j=2}^di\xi_jA^j_+\Big),\end{aligned}$$ with
$\lambda = \gamma + i\tau.$

Define variables $Z=(u_H,u_P)^T$ as $ W = \Phi Y = \Phi V Z$, $\bar
\Gamma Z:= \Gamma \Phi V Z,$ and $ (f_H,f_P)^T = V^{-1}\tilde F$. We
have
\begin{equation}\label{1steqsZ}\partial_{x_1}\begin{pmatrix}u_H\\u_P\end{pmatrix}
= \begin{pmatrix}H&0\\0&P\end{pmatrix}
\begin{pmatrix}u_H\\u_P\end{pmatrix} +
\begin{pmatrix}f_H\\f_P\end{pmatrix}, \quad \bar \Gamma Z =
0.\end{equation}

Then the maximal stability estimate for the low frequency regimes in
\cite{GMWZ6} states that
\begin{equation}\label{max-est} (\gamma+\rho^2)|u_H|^2_{L^2}+|u_P|^2_{L^2} + |u_H(0)|^2+|u_P(0)|^2
\lesssim \wprod {|f_H|,|u_H|}+\wprod{|f_P|,|u_P|}.\end{equation}

We note that in the final step there in \cite{GMWZ1}, the standard
Young's inequality has been used to absorb all terms of $(u_H,u_P)$
into the left-hand side, leaving the $L^2$ norm of $F$ alone in the
right hand side. For our purpose, we shall keep it as stated in
\eqref{max-est}. Here, by $f \lesssim g$, we mean $f \le C g$, for
some $C$ independent of parameter $\rho$.

We remark also that as shown in \cite{GMWZ1}, all of coordinate
transformation matrices are uniformly bounded. Thus a bound on $Z =
(u_H,u_P)^T$ would yield a corresponding bound on the solution $U$.

\subsection{$L^2$ and $L^\infty$ resolvent bounds}
Changing variables as above and taking the inner product of each
equation in \eqref{1steqsZ} against $u_H$ and $u_P$, respectively,
and integrating the results over $[0,x_1]$, for $x_1>0$, we obtain
\begin{equation}\begin{aligned} \frac 12 |u_H(x_1)|^2 &= \frac 12|u_H(0)|^2 + \R\int_0^{x_1}(H(\lambda,\txi)u_H\cdot u_H  + f_H\cdot u_H)dz,\\\frac 12 |u_P(x_1)|^2 &= \frac 12|u_P(0)|^2 + \R\int_0^{x_1}(P(\lambda,\txi)u_P\cdot u_P  + f_P\cdot u_P)dz.\end{aligned}\end{equation}

This together with the facts that $|H|\le C\rho$ and $|P|\le C$
yields
\begin{equation}\begin{aligned} |u_H|_{L^\infty(x_1)}^2 &\lesssim |u_H(0)|^2 + \rho|u_H|_{L^2}^2 + \wprod {|f_H|,|u_H|},
\\|u_P|_{L^\infty(x_1)}^2 &\lesssim |u_P(0)|^2 + |u_P|_{L^2}^2+\wprod{|f_P|,|u_P|},\end{aligned}\end{equation}
and thus in view of \eqref{max-est} gives
\begin{equation} \label{Linftyest}(\gamma+\rho^2)|u_H|^2_{L^2}+|u_P|^2_{L^2} + \rho|u_H|_{L^\infty}^2+|u_P|_{L^\infty}^2\lesssim \wprod {|f_H|,|u_H|}+\wprod{|f_P|,|u_P|}.\end{equation}

Now applying the Young's inequality, we get $$\wprod
{|f_H|,|u_H|}+\wprod{|f_P|,|u_P|} \le (\epsilon |u_P|_{L^\infty}^2 +
C_\epsilon|f_P|_{L^1}^2) +\Big(\epsilon \rho|u_H|_{L^\infty}^2 +
\frac{C_\epsilon}{\rho}|f_H|_{L^1}^2\Big) $$and thus for $\epsilon$
sufficiently small, together with \eqref{Linftyest},
\begin{equation}(\gamma+\rho^2)|u_H|^2_{L^2}+|u_P|^2_{L^2} + \rho|u_H|_{L^\infty}^2+|u_P|_{L^\infty}^2\lesssim \frac{1}{\rho}|f_H|_{L^1}^2 + |f_P|_{L^1}^2.\end{equation}

Therefore in term of $Z = (u_H,u_P)^t$,
\begin{equation}\label{estZ}\begin{aligned} |Z|_{L^\infty(x_1)}\le
C\rho^{-1}|f|_{L^1} \quad\mbox{and}\quad |Z|_{L^2(x_1)} \le
C\rho^{-3/2}|f|_{L^1}.\end{aligned}\end{equation}

Unfortunately, unlike the shock cases (see \cite{N1}), bounds
\eqref{estZ} are not enough for our need to close the analysis in
dimension $d=3$. See Remark \ref{rem-est-refined}. In the following
subsection, we shall derive better bounds for $Z$ in both $L^\infty$
and $L^2$ norms.

\subsection{Refined $L^2$ and $L^\infty$ resolvent bounds}\label{sec-refinedbd} With the same
notations as above, we prove in this subsection that there hold refined resolvent bounds:
\begin{equation}\label{estZ-refined}\begin{aligned} |Z|_{L^\infty(x_1)}\lesssim
\rho^{-1+\epsilon}(|f|_{L^1}+|f|_{L^\infty})\qquad \mbox{and}\qquad
|Z|_{L^2(x_1)}\lesssim
\rho^{-3/2+\epsilon}(|f|_{L^1}+|f|_{L^\infty})\end{aligned}\end{equation}
for some small $\epsilon>0$. We stress here that a refined factor
$\rho^\epsilon$ in $L^\infty$ 
is
crucial in our analysis for three-dimensional case. See Remark
\ref{rem-est-refined}.

Assumption (H3$'$) implies the following block structure (see
\cite{MeZ3,GMWZ6}). Here, we use the polar coordinate notation
$\zeta = (\tau,\gamma,\txi), \zeta = \rho \hat \zeta$, where $\hat
\zeta = (\hat \tau, \hat \gamma, \hat \txi)$ and $\hat \zeta \in
S^d$.

\begin{proposition}[Block structure; \cite{GMWZ6}]\label{prop-blockst} For all $\underline{\hat\zeta}$ with $\underline{\hat
\gamma} \ge 0$ there is a neighborhood $\omega$ of
$(\underline{\hat\zeta},0)$ in $S^d \times \overline{\RR}_+$ and
there are $C^\infty$ matrices $T(\hat \zeta, \rho)$ on $\omega$ such
that $T^{-1}H_0 T$ has the block diagonal structure
\begin{equation}
T^{-1}H_0 T = H_B(\hat \zeta, \rho) = \rho \hat H_B(\hat \zeta,
\rho)\end{equation} with
\begin{equation}
\label{block2b} \hat H_B(\hat \zeta, \rho)=
\left[\begin{array}{cccc} Q_1 & 0 & &
\\
  0    &      \ddots &0\\
 &     0 & Q_{p}
\end{array}\right]\, (\hat \zeta, \rho)
\end{equation}
 with  diagonal blocks $ Q_k $
of size $\nu_k \times \nu_k$ such that:

(i) (Elliptic modes) $\Re Q_k $ is either positive definite or
negative definite.

(ii) (Hyperbolic modes) $\nu _k =1$, $\Re Q_k =0$ when $\hat \gamma
= \rho =0$, and $\partial_{\cg}(\Re Q_k) \partial_{\rho}(\Re Q_k)
>0$.

(iii) (Glancing modes) $\nu _k >1$, $Q_k$ has the following form:
 \begin{equation} \label{Dbl1}
 Q_k ({\hat \zeta}, \rho)    =
i(\underline {\mu}_k \Id + J) + i\sigma Q'_k({\hat \txi}) + \cO(\hat
\gamma + \rho),
\end{equation}
where $\sigma:=|{\hat \txi}-\underline{\hat \txi}|,$
 \begin{equation}
 J:=\left[\begin{array}{cccc} 0  & 1 & 0&
\\
0  &0  & \ddots  &  0
\\
  & \ddots &      \ddots & 1 \\
 &  &  0 & 0
\end{array}\right], \qquad Q'_k({\hat \txi}):=\left[\begin{array}{cccc} q_1  & 0 & \cdots
& 0
\\
q_2  & 0 & \cdots & 0
\\
   &  & \cdots
& \\
 q_{\nu_k} & 0 & \cdots
& 0
\end{array}\right]
\end{equation}
$q_{\nu_k}\not=0$, and the lower left hand corner $a$ of $Q_k$
satisfies $\partial_{\cg}(\Re a)
\partial_{\rho}(\Re a)
>0$.

(iv) (Totally nonglancing modes) $\nu _k >1$, eigenvalue of $Q_k$,
when $\hat \gamma = \rho =0$, is totally nonglancing, see Definition
4.3, \cite{GMWZ6}.

\end{proposition}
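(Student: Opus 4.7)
The plan is to establish the block structure locally around an arbitrary base point $(\underline{\hat\zeta},0) \in S^d\times\overline\RR_+$ by first isolating spectral clusters of $H_0(\hat\zeta,\rho)$ via Dunford--Taylor projectors, and then classifying each cluster according to whether the corresponding eigenvalue of the Euler symbol at $U_+$ is off the imaginary axis, simple on it, of branching type, or totally nonglancing. The reduction at each step proceeds by smooth conjugation, so it suffices to exhibit the canonical form on each invariant subspace.

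First I would separate elliptic modes. Since $H_0(\hat\zeta,\rho)$ is a smooth matrix-valued function, any eigenvalue $\mu$ of $H_0(\underline{\hat\zeta},0)$ with $\Re \mu\ne 0$ persists with the same sign of real part in a neighborhood, and the associated total projector $P(\hat\zeta,\rho)=\frac{1}{2\pi i}\oint(\zeta-H_0)^{-1}d\zeta$ taken over a small loop around $\mu$ is smooth. Restricting $H_0$ to the range of $P$ and conjugating by a smooth frame yields a block $Q_k$ with $\Re Q_k$ of constant definite sign, establishing case (i). In particular, this exhausts the analysis when $\underline{\hat\gamma}>0$, because the symmetrizability hypotheses (A1)--(A2) together with the Kawashima-type genuine coupling force all eigenvalues of $H_0$ to leave the imaginary axis as soon as $\hat\gamma>0$.

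The remaining case is $\underline{\hat\gamma}=0$, where purely imaginary eigenvalues $i\underline{\mu}_k$ of $H_0(\underline{\hat\zeta},0)$ must be treated. For each such $\underline{\mu}_k$, I apply the same Dunford--Taylor projector to obtain an invariant subspace of dimension $\nu_k$ equal to the algebraic multiplicity, and analyze the reduced block. If $\nu_k=1$, then $Q_k$ is scalar and purely imaginary at the base point; differentiating the defining equation $\det(H_0-\mu)=0$ with respect to $\hat\gamma$ and $\rho$ separately and using hyperbolicity of the Euler symbol together with dissipativity of the viscous symbol (assumption (A2), read via the Majda--Pego/Kawashima criterion) gives $\partial_{\cg}(\Re Q_k)>0$ and $\partial_\rho(\Re Q_k)>0$, which is case (ii). If $\nu_k>1$ and the corresponding eigenvalue of $\sum_j \xi_j dF^j(U_+)$ is semisimple of constant multiplicity, the same smooth-projector argument diagonalizes $Q_k$ into $\nu_k$ scalar hyperbolic blocks of type (ii).

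The remaining cases (iii) and (iv) are the genuine obstacle. For glancing modes, I would use the constant-multiplicity hypothesis in (H3$'$) to show that at $({\underline{\hat\zeta}},0)$ the restricted block $Q_k$ has a single Jordan chain, so after a smooth change of basis it takes the nilpotent form $i\underline\mu_k \Id + iJ$ with $J$ as in \eqref{Dbl1}. The first-order Taylor expansion in $\hat\xi$ around $\underline{\hat\xi}$ then gives the companion-like perturbation $i\sigma Q'_k(\hat\xi)$; the key point is that the coefficient $q_{\nu_k}$, which is computed from the bottom-left entry of the derivative of the characteristic polynomial, is nonzero because a zero there would contradict the minimality of the Jordan chain derived from constant multiplicity. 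Dissipativity of the viscous symbol again gives the sign condition on the lower-left corner $a$. For totally nonglancing modes, the structure is built into Definition 4.3 of \cite{GMWZ6}, and one simply verifies that the Dunford--Taylor reduction preserves this property. The main obstacle is the glancing case: checking that the rank-one perturbation has $q_{\nu_k}\ne 0$ and that the dissipative correction supplies the crucial sign $\partial_{\cg}(\Re a)\partial_\rho(\Re a)>0$, both of which require care with how the viscous perturbation interacts with the nilpotent Jordan structure.
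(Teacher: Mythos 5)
Your overall architecture (Dunford--Taylor spectral separation, elliptic modes for eigenvalues off the imaginary axis, scalar hyperbolic blocks for simple imaginary eigenvalues, Jordan-block normal form plus first-order perturbation for the glancing case) matches the standard construction that the paper simply cites from M\'etivier (Theorem 8.3.1) and \cite{GMWZ6}; the only substantive content the paper adds in its own proof is the identification $q_{\nu_k}(\underline{\hat\txi}) = |\nabla_{\txi}D_k| = c\,|\nabla_{\txi}\lambda_k(\xi)|$ and the observation that (H4$'$) is what guarantees $q_{\nu_k}\neq 0$.

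This is exactly where your argument has a genuine gap. You assert that $q_{\nu_k}\neq 0$ "because a zero there would contradict the minimality of the Jordan chain derived from constant multiplicity." That is false: the length of the Jordan chain in $\xi_1$ controls the normal derivatives $\partial_{\xi_1}\lambda_k=\dots=\partial_{\xi_1}^{\nu_k-1}\lambda_k=0$, $\partial_{\xi_1}^{\nu_k}\lambda_k\neq 0$, whereas $q_{\nu_k}$ measures the \emph{tangential} variation $\nabla_{\txi}\lambda_k$ of the eigenvalue at the glancing point, and nothing in semisimplicity or constant multiplicity prevents this from vanishing. The paper's Appendix gives the explicit counterexample $A_1=\left(\begin{smallmatrix}0&1\\1&0\end{smallmatrix}\right)$, $A_2=\left(\begin{smallmatrix}0&0\\0&1\end{smallmatrix}\right)$, where at $\xi_1=0$ the eigenvalue $\lambda\equiv 0$ has $\nabla\lambda=0$. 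The nonvanishing of $q_{\nu_k}$ is precisely the content of the additional Hypothesis (H4$'$), which your proof never invokes; indeed Remark \ref{rem-H4'} states this is the only place in the paper where (H4$'$) is used. As written, your argument purports to prove a statement that is not true under (H3$'$) alone, so the glancing case must be repaired by computing $q_{\nu_k}$ in terms of $\nabla_{\txi}\lambda_k$ and then appealing to (H4$'$).
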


\begin{proof} For a proof, see for example \cite{Met}, Theorem
8.3.1. It is also straightforward to see that for the case (iii),
$$q_{\nu_k}(\underline{\hat \txi}) = |\nabla_{\txi}D_k(\underline{\zeta},\underline{\xi}_1) | = c|\nabla_\txi \lambda_k(\xi)|,$$
where $c$ is a nonzero constant, $D_k(\zeta,\xi_1)$ is defined as $
\det (iQ_k(\zeta) + \xi_1 Id)$, and $\lambda_k(\xi)$ is the zero of
$D_k(\zeta,\xi_1)$ (recalling $\zeta = (\lambda,\txi)$) satisfying
$$
\partial_{\xi_1}\lambda_k =...=\partial_{\xi_1}^{\nu_k-1}\lambda_k
=0, \quad
\partial_{\xi_1}^{\nu_k} \lambda_k \not = 0\qquad \mbox{at  }(\underline{\txi},\underline{\xi}_1).$$
Thus, assumption (H4$'$) guarantees the nonvanishing of $q_{\nu_k}$.
We skip the proof of other facts.
\end{proof}

We shall treat each mode in turn. The following simple lemma may be
found useful.

\begin{lemma}\label{lem-bounds} Let $U$ be a solution of $\partial_z U = QU + F$ with $U(+\infty)=0$. Assume that there is a positive
[resp., negative] symmetric matrix $S$ such that
\begin{equation}\label{Sym-ineq}\Re SQ := \frac12 (SQ + Q^* S^*)\ge \theta
Id\end{equation} for some $\theta>0$, and $S\ge Id$ [resp., $-S\ge
Id$]. Then there holds
\begin{equation}\label{est-posneg}\begin{aligned} |U|^2_{L^\infty}+
\theta|U|_{L^2}^2 &\lesssim
|F|_{L^1}^2\\
\mbox{[resp.,   } |U|^2_{L^\infty} + \theta|U|_{L^2}^2&\lesssim
|U(0)|^2 + |F|_{L^1}^2\mbox{ ]}.\end{aligned}\end{equation}
\end{lemma}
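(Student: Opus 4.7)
The plan is to prove both assertions by a direct symmetrizer energy estimate, with the sign of $S$ dictating the direction in which one integrates the energy identity. Since $S$ is symmetric, differentiating gives
\begin{equation*}
\partial_z(U^*SU) \;=\; U^*(SQ+Q^*S)U + 2\Re(U^*SF) \;\ge\; 2\theta|U|^2 + 2\Re(U^*SF),
\end{equation*}
where the inequality uses the hypothesis $\Re SQ\ge \theta\, Id$. In the positive case ($S\ge Id$), the natural move is to integrate from $z$ to $+\infty$ and use $U(+\infty)=0$, which produces
\begin{equation*}
U^*(z)SU(z) + 2\theta \int_z^{+\infty} |U|^2\,d\zeta \;\le\; 2\int_z^{+\infty}|U^*SF|\,d\zeta \;\lesssim\; \|U\|_{L^\infty}\,\|F\|_{L^1},
\end{equation*}
where I use the boundedness of the fixed matrix $S$. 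Since $S\ge Id$, the left-hand side dominates $|U(z)|^2$; taking the supremum in $z\ge 0$ yields $\|U\|_{L^\infty}^2 + \theta\|U\|_{L^2}^2 \lesssim \|U\|_{L^\infty}\|F\|_{L^1}$, after which a standard application of Young's inequality absorbs $\|U\|_{L^\infty}$ into the left side and gives the first bound.

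For the negative case I would reduce to the previous one by setting $T:=-S\ge Id$, for which the hypothesis $\Re SQ\ge \theta\, Id$ becomes $\Re TQ\le -\theta\, Id$. The corresponding identity $\partial_z(U^*TU) \le -2\theta|U|^2 + 2\Re(U^*TF)$ should now be integrated from $0$ to $z$, which picks up the boundary term $U^*(0)TU(0)$ on the right:
\begin{equation*}
U^*(z)TU(z) + 2\theta\int_0^z |U|^2\,d\zeta \;\le\; U^*(0)TU(0) + 2\int_0^z |U^*TF|\,d\zeta.
\end{equation*}
Using $T\ge Id$ to control $|U(z)|^2$ from the left, bounding the forcing term by $C\|U\|_{L^\infty}\|F\|_{L^1}$, taking the supremum over $z\ge 0$, and absorbing the $L^\infty$ factor by Young's inequality produces the claimed bound with the extra $|U(0)|^2$ contribution.

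There is no serious obstacle here: this is a textbook symmetrizer estimate of the type used throughout Kreiss-style $L^2$ theory. The only points requiring a bit of care are (i) choosing the correct direction of integration for each sign of $S$, so that the boundary condition at the correct endpoint (namely $U(+\infty)=0$ in the positive case, $U(0)$ in the negative case) is what appears, and (ii) keeping the right-hand side in the form $\|U\|_{L^\infty}\|F\|_{L^1}$ all the way through, so that Young's inequality can be used at the end to absorb the $L^\infty$ factor on the left rather than prematurely converting to a Cauchy--Schwarz $L^2$ estimate that would not yield the sharp $|F|_{L^1}^2$ right-hand side.
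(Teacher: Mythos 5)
Your proof is correct and follows essentially the same route as the paper, which simply takes the inner product of the equation against $SU$ and integrates over $[x_1,\infty)$ in the positive case and over $[0,x_1]$ in the negative case. Your write-up just makes explicit the steps the paper leaves to the reader (the sign bookkeeping, keeping the right-hand side in the form $\|U\|_{L^\infty}\|F\|_{L^1}$, and the final absorption by Young's inequality).
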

\begin{proof} Taking the inner product of the
equation of $U$ against $SU$ and integrating the result over
$[x_1,\infty]$ for the first case [resp., $[0,x_1]$ for the second
case], we easily obtain the lemma.\end{proof}

Thanks to Proposition \ref{prop-blockst}, we can decompose $U$ as
follows
\begin{equation}U = u_{P} + u_{H_e} + u_{H_{h}} + u_{H_{g}}
+ u_{H_{t}},\end{equation}corresponding to parabolic, elliptic,
hyperbolic, glancing, or totally nonglancing modes.

\subsubsection{Parabolic modes} Since spectrum of $P$ is away from
the imaginary axis, we can assume that
$$P(\lambda,\txi)=\begin{pmatrix}P_+&0\\0&P_-\end{pmatrix}$$
with $\pm \Re P_\pm \ge c>0$. Therefore applying Lemma
\ref{lem-bounds} with $S = Id$ or $-Id$ yields
\begin{equation}\label{para-est}\begin{aligned} |u_{P_+}|^2_{L^\infty}+ |u_{P_+}|_{L^2}^2 &\lesssim |F_{P_+}|_{L^1}^2,
\\|u_{P_-}|^2_{L^\infty} + |u_{P_-}|_{L^2}^2&\lesssim |u_{P_-}(0)|^2
+ |F_{P_-}|_{L^1}^2.\end{aligned}\end{equation}

\subsubsection{Elliptic modes} This is case (i) in
Proposition \ref{prop-blockst} when the spectrum of $Q_k$ lies in
$$\{\R \mu > \delta \} \qquad [\mbox{resp.,  }\{\R \mu
<-\delta\}].$$

In this case, there are positive symmetric matrices $S^k(\hat
\zeta,\rho)$, $C^\infty$ on a neighborhood $\omega$ of
$(\underline{\hat \zeta},0)$ and such that
$$ \Re S^k Q^k \ge c Id \qquad [\mbox{resp.,  }-\Re S^k Q^k \ge c
Id]$$ for $c>0$. Thus, Lemma \ref{lem-bounds} again yields
\begin{equation}\label{elli-est}\begin{aligned} |u_{H_{e+}}|^2_{L^\infty}+ \rho|u_{H_{e+}}|_{L^2}^2 &\lesssim |F_{H_{e+}}|_{L^1}^2,
\\|u_{H_{e-}}|^2_{L^\infty} + \rho|u_{H_{e-}}|_{L^2}^2&\lesssim |u_{H_{e-}}(0)|^2
+ |F_{H_{e-}}|_{L^1}^2.\end{aligned}\end{equation}

\subsubsection{Hyperbolic modes} This is case (ii) in Proposition
\ref{prop-blockst}. In this case, as shown in \cite{Met} we can
write
\begin{equation} Q^k(\hat\zeta,\rho) = q^k(\hat \zeta) Id + \rho
\cR^k(\hat \zeta,\rho)\end{equation} where $q^k$ is purely imaginary
when $\hat \gamma =0$, $\dot{q}^k:=\partial_{\hat \gamma}\R q^k(\hat
\zeta)$ does not vanish, and the spectrum of $\dot{q}^k\cR^k(\hat
\zeta,0)$ is contained in the half space $\{\R\mu
>0\}$. Therefore, when $\dot q^k >0 $ [resp., $\dot q^k <0 $] and thus for
$(\zeta,\hat\gamma)$ sufficiently close to $(\hat \zeta,0)$
$$ \R q^k \ge c\hat \gamma, \qquad [\mbox{resp.,  }\R q^k \le -c \hat
\gamma],$$ we have positive symmetric matrices $S^k(\hat
\zeta,\rho)$ satisfying
$$ \Re S^k Q^k \ge c (\hat \gamma + \rho) Id \qquad [\mbox{resp.,  }-\Re S^k Q^k \ge c
(\hat \gamma + \rho)Id]$$ for $c>0$. Thus, again by Lemma
\ref{lem-bounds}, we obtain
\begin{equation}\label{hyper-est}\begin{aligned} |u_{H_{h+}}|^2_{L^\infty}+ (\gamma+\rho^2)|u_{H_{h+}}|_{L^2}^2 &\lesssim |F_{H_{h+}}|_{L^1}^2,
\\|u_{H_{h-}}|^2_{L^\infty} + (\gamma+\rho^2)|u_{H_{h-}}|_{L^2}^2&\lesssim |u_{H_{h-}}(0)|^2
+ |F_{H_{h-}}|_{L^1}^2.\end{aligned}\end{equation}

\subsubsection{Totally nonglancing modes}\label{subsec-totalnongl} This is case (iv) in
Proposition \ref{prop-blockst}. As constructed in \cite{GMWZ6},
there exist symmetrizers $S^k$ that are positive [resp. negative]
definite when the mode is totally incoming [resp. outgoing]. Denote
$u_{H_{t+}} $ [resp., $u_{H_{t-}}$] associated with totally incoming
[resp. outgoing] modes. Then similarly as in above, we also have
\begin{equation}\label{totalnongl-est}\begin{aligned} |u_{H_{t+}}|^2_{L^\infty}+ (\gamma+\rho^2)|u_{H_{t+}}|_{L^2}^2 &\lesssim |F_{H_{t+}}|_{L^1}^2,
\\|u_{H_{t-}}|^2_{L^\infty} + (\gamma+\rho^2)|u_{H_{t-}}|_{L^2}^2&\lesssim |u_{H_{t-}}(0)|^2
+ |F_{H_{t-}}|_{L^1}^2.\end{aligned}\end{equation}

Thus, putting these estimates together with noting that the
stability estimate \eqref{max-est} already gives a bound on
$|u(0)|$, we easily obtain sharp bounds on $u$ in $L^\infty$ and
$L^2$ for all above cases:
\begin{equation}\label{goodbound1} |u_k|_{L^\infty}^2 +\rho^2|u_k|_{L^2}^2 \lesssim
|f|_{L^1}^2 + |u_{H_g}|_{L^\infty}|f|_{L^1},\end{equation} for all
$k = P,H_e,H_h,H_t$.

\subsubsection{Glancing modes}  Hence, we remain to consider the final
case: case (iii) in Proposition \ref{prop-blockst}. Recall
\eqref{Dbl1}
 \begin{equation}\label{gl-block}
 Q_k ({\hat \zeta}, \rho)    =
i(\underline {\mu}_k \Id + J) + i\sigma Q'_k({\hat \txi}) + \cO(\hat
\gamma + \rho)
\end{equation} on a
neighborhood of $(\underline{\hat \zeta},0)$, where $\sigma=|{\hat
\txi}-\underline{\hat \txi}|$. We consider two cases.\\\\
{\bf Case a.} $\sigma \lesssim \rho^\epsilon$ for some small
$\epsilon
>0$. Recall that we consider the reduced system:
\begin{equation}\label{sys-uk} \partial_{x_1}u_k =\rho Q_k(\hat
\zeta,\rho) u_k + f_k\end{equation} with $Q_k(\hat \zeta,\rho)$
having a form as in \eqref{gl-block}. It is clear that the $L^p$
norm of $u_k$ remains unchanged under the transformation $u_k$ to $
u_k e^{-i\underline{\mu}_kx_1}$. Thus, we can assume that
$\underline{\mu}_k =0$. Note that we have the following bounds by
\eqref{estZ} \begin{equation}\label{est-uk}\begin{aligned}
|u_k|_{L^\infty(x_1)}\lesssim \rho^{-1}|f|_{L^1}
\quad\mbox{and}\quad |u_k|_{L^2(x_1)} \lesssim
\rho^{-3/2}|f|_{L^1}.\end{aligned}\end{equation}

To prove the refined bounds \eqref{estZ-refined}, we first observe
that
$$|\partial_{x_1}u_{k}|_{L^\infty} \lesssim \rho |u_k|_{L^\infty} + |f_k|_{L^\infty}
\lesssim |f|_{L^1} + |f|_{L^\infty},$$ where the last inequality is
due to \eqref{est-uk}. Now, write $u_k =
(u_{k,1},\cdots,u_{k,\nu_k})$. Thanks to the special form of $Q_k$
in \eqref{gl-block}, we have
\begin{equation}\label{sys-uks}\partial_{x_1}u_{k,\nu_k} =i\rho \sigma Q'_k(\hat\txi)u_k +\cO(\gamma + \rho^2)u_k+
f_k.
\end{equation}
Taking inner product of the equation \eqref{sys-uks} against
$\partial_{x_1}u_{k,\nu_k}$, we easily obtain by applying the
standard Young's inequality:
\begin{equation}\label{est-der-uks}|\partial_{x_1}u_{k,\nu_k}|_{L^2}^2 \lesssim \rho^{2+2\epsilon}|u_k|^{2}_{L^2}
+|f_k|_{L^1} |\partial_{x_1}u_{k,\nu_k}|_{L^\infty}\lesssim \rho^{-1+2\epsilon}|f|_{L^1}^2+|f|_{L^\infty}^2.
\end{equation}

Similarly, for $u_{k,\nu_k-1}$ satisfying
$$\partial_{x_1}u_{k,\nu_k-1} =i \rho \sigma Q'_k(\hat\txi) u_{k}+i\rho u_{k,\nu_k} +\cO(\gamma + \rho^2)u_k+ f_k,$$
we have
\begin{equation}\label{est-der-uks1}|\partial_{x_1}u_{k,\nu_k-1}|_{L^2}^2 \lesssim \rho^{2+2\epsilon}|u_k|^{2}_{L^2}
+\rho|<u_{k,\nu_k},\partial_{x_1}u_{k,\nu_k-1}>|+|f_k|_{L^1} |\partial_{x_1}u_{k,\nu_k}|_{L^\infty}.
\end{equation}
Here, integration by parts and Young's inequality yield
$$\begin{aligned}\rho|<u_{k,\nu_k},\partial_{x_1}u_{k,\nu_k-1}>|
&\lesssim \rho|\partial_{x_1}u_{k,\nu_k}|_{L^2}|u_{k,\nu_k-1}|_{L^2}
+ \rho|u_k(0)|^2.
\end{aligned}$$
Thus, using the refined bound \eqref{est-der-uks} and noting that
$$|u_k(0)|^2 \lesssim |<f,u_k>|\lesssim |f|_{L^1}|u_k|_{L^\infty}
\lesssim \rho^{-1}|f|_{L^1}^2,$$ we obtain
$$\begin{aligned}\rho|<u_{k,\nu_k},\partial_{x_1}u_{k,\nu_k-1}>| &\lesssim \rho^{1/2+\epsilon}\rho^{-3/2}(|f|_{L^1}^2+|f|_{L^\infty}^2)
\end{aligned}$$

Therefore, applying this estimate into \eqref{est-der-uks1}, we get \begin{equation}\label{est-der-uks-1}|\partial_{x_1}u_{k,\nu_k-1}|_{L^2}^2 \lesssim \rho^{-1+\epsilon}(|f|_{L^1}^2 + |f|_{L^\infty}^2).
\end{equation}
Using this refined bound, we can estimate the same for $u_{k,\nu_k-2}$, $u_{k,\nu_k-3}$, and so on. Thus, we obtain a refined bound for $u_k$: \begin{equation}\label{est--der-uk-refined}|\partial_{x_1}u_{k}|_{L^2}^2 \lesssim \rho^{-1+\epsilon}(|f|_{L^1}^2 + |f|_{L^\infty}^2)
\end{equation} where $\epsilon$ may be changed in each step and smaller than the original one. This and the standard Sobolev imbedding yield
\begin{equation}\label{est-uk-refined}|u_k|_{L^\infty}^2 \lesssim |u_k|_{L^2}|\partial_{x_1}u_{k}|_{L^2} \lesssim \rho^{-2+\epsilon}(|f|_{L^1}^2 + |f|_{L^\infty}^2)
\end{equation}
which proves the $L^\infty$ refined bound in \eqref{estZ-refined}
for $Z$. Using \eqref{est-uk-refined} into \eqref{Linftyest}, we
also obtain the refined bound in $L^2$ as claimed in
\eqref{estZ-refined}:
\begin{equation}|u_k|_{L^2}^2 \lesssim
\rho^{-3+\epsilon}(|f|_{L^1}^2 + |f|_{L^\infty}^2),
\end{equation} for some $\epsilon>0$.
\\
\\
{\bf Case b.} $\sigma \gtrsim \rho^\epsilon$ for some small
$\epsilon $ in $(0,1/2)$. We shall diagonalize this block. Recall
that
 \begin{equation}\label{gl-block1}
 Q_k ({\hat \zeta}, \rho)    =
i\underline {\mu}_k \Id  +i\left[\begin{array}{cccc} 0  & 1 & 0&
\\
0  &0  & \ddots  &  0
\\
  & \ddots &      \ddots & 1 \\
\sigma q_{\nu_k}&  &  0 & 0
\end{array}\right]+\cO(\sigma).
\end{equation}

Following \cite{Z2,Z3,GMWZ1}, we diagonalize this glancing block by
$$u'_{H_g}:=T^{-1}_{H_{g}} u_{H_g},$$ where $u_{H_g}:=u_{H_{g+}} + u_{H_{g-}}$. Here $u_{H_{g\pm}}$
are defined as the projections of $u_{H_g}$ onto the growing (resp.
decaying) eigenspaces of $Q_k ({\hat \zeta}, \rho)$ in
\eqref{gl-block1}. We recall the following whose proof can be found
in \cite{Z2,Z3} or Lemma 12.1, \cite{GMWZ1}.
\begin{lemma}[Lemma 12.1, \cite{GMWZ1}] The diagonalizing transformation $T_{H_g}$ may be chosen so that
\begin{equation}\label{T-est} |T_{H_g}|\le C, \qquad |T^{-1}_{H_g}|\le C\beta,\qquad |T^{-1}_{{H_g}|_{H_{g-}}}|\le C\alpha\end{equation}
where $\alpha,\beta$ are defined as
\begin{equation}\label{albeta}\beta := \sigma^{-1+1/{\nu_k}},\qquad\alpha
:= \sigma^{(1-[(\nu_k+1)/2])/\nu_k},\end{equation} and
$T^{-1}_{{H_g}|_{H_{g-}}}$ denotes the restriction of $T^{-1}_{H_g}$
to subspace $H_{g-}$. In particular, $\beta \alpha^{-2} \ge 1$.
\end{lemma}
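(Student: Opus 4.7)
My plan is to construct $T_{H_g}$ explicitly by exploiting the Jordan-plus-perturbation structure of the glancing block. Write $a := \sigma q_{\nu_k}$ and set $\Lambda := a^{1/\nu_k}$, the natural small scale at which the branching eigenvalues separate. The eigenvalues of the matrix $J + \sigma Q'_k(\hat\txi)$ in \eqref{gl-block1} (ignoring the $\mathcal{O}(\sigma)$ remainder) are the $\nu_k$-th roots $\Lambda\omega^{j}$, $\omega=e^{2\pi i/\nu_k}$, $j=0,\ldots,\nu_k-1$, with corresponding eigenvectors of the Vandermonde form $v_j=(1,\Lambda\omega^j,\Lambda^2\omega^{2j},\ldots,\Lambda^{\nu_k-1}\omega^{(\nu_k-1)j})^T$. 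Setting $D:=\diag(1,\Lambda,\Lambda^2,\ldots,\Lambda^{\nu_k-1})$ and $T_0$ the (unitary up to constants) DFT/Vandermonde matrix in the roots of unity $\omega^{j}$, I would take $T_{H_g}:=D\,T_0$, extended to the full glancing block by an implicit-function perturbation to handle the $\mathcal{O}(\sigma)$ remainder.

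The bound $|T_{H_g}|\le C$ is then immediate: $|T_0|\le C$ by unitarity of the DFT, and $|D|\le\max(1,\Lambda^{\nu_k-1})\le C$ since $\sigma\le 1$. The bound $|T_{H_g}^{-1}|\le C\beta$ follows from $T_{H_g}^{-1}=T_0^{-1}D^{-1}$, $|T_0^{-1}|\le C$, and $|D^{-1}|=\Lambda^{-(\nu_k-1)}=\sigma^{-(\nu_k-1)/\nu_k}=\sigma^{-1+1/\nu_k}=\beta$. The regular-perturbation $\mathcal{O}(\sigma)$ correction to $Q_k$ can be absorbed without changing these orders since the splitting has gap $\Lambda$ and $\sigma/\Lambda=\sigma^{1-1/\nu_k}$ is small.

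The delicate piece — and the main obstacle — is the refined estimate $|T^{-1}_{{H_g}|_{H_{g-}}}|\le C\alpha$ on the restriction of $T^{-1}$ to the decaying subspace $H_{g-}$. The point is that, as $\hat\gamma\to 0^+$, the decaying eigenvalues are exactly those $\Lambda\omega^{j}$ with $\Re\omega^{j}<0$ (a selection of $[(\nu_k-1)/2]$ or $\nu_k/2$ of the roots of unity). For any $u\in H_{g-}$, $|T^{-1}_{H_g}u|$ equals the norm of the coordinate vector in the decaying-eigenvector basis, so the sharp bound is the inverse of the minimum singular value of the rectangular Vandermonde submatrix formed by the decaying columns of $T_{H_g}$. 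I would estimate this minimum singular value by writing the restricted Vandermonde as $D\cdot V_-$, with $V_-$ the Vandermonde in the chosen half of the roots of unity, and then exploiting that inverting only half of a DFT matrix yields the intermediate power of $\Lambda^{-1}$ recorded in the exponent $(1-[(\nu_k+1)/2])/\nu_k$; this is essentially a counting of how many $\Lambda$-factors can be eliminated by picking the best generalized-inverse combination of half the eigenvectors.

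Finally, the inequality $\beta\alpha^{-2}\ge 1$ is a direct arithmetic verification on the exponents: since $\beta\alpha^{-2}=\sigma^{-1+1/\nu_k-2(1-[(\nu_k+1)/2])/\nu_k}$, one checks case by case ($\nu_k$ even vs.\ odd) that the exponent is $\le 0$, so the quantity is $\ge 1$ for $\sigma\le 1$. In practice, since the statement is Lemma 12.1 of \cite{GMWZ1} (cf.\ \cite{Z2,Z3}), I would simply carry out the diagonalization sketched above and then cite the Vandermonde singular-value computation from those references to close out the $\alpha$-bound.
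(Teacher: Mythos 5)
The paper does not actually prove this lemma: it is quoted verbatim from Lemma 12.1 of \cite{GMWZ1} (with proofs also in \cite{Z2,Z3}), so there is no in-paper argument to compare against. That said, your Vandermonde construction is precisely the mechanism used in those references: the eigenvalues of the perturbed Jordan block scatter as the $\nu_k$-th roots of $\sigma q_{\nu_k}$ at scale $\Lambda=\sigma^{1/\nu_k}$, the eigenvector matrix factors as $D\,T_0$ with $D=\diag(1,\Lambda,\dots,\Lambda^{\nu_k-1})$ and $T_0$ a bounded Vandermonde in roots of unity, and the bounds $|T|\le C$, $|T^{-1}|\le C\Lambda^{-(\nu_k-1)}=C\beta$ follow immediately. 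Your case check that the $\sigma$-exponent of $\beta\alpha^{-2}$ is $-1/\nu_k$ for $\nu_k$ even and $0$ for $\nu_k$ odd, hence $\beta\alpha^{-2}\ge1$, is correct. Here the nonvanishing of $q_{\nu_k}$ (i.e.\ (H4$'$)) is exactly what makes $\Lambda$ the true splitting scale, as the paper notes in Remark \ref{rem-H4'}.

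The one step you only gesture at is the $\alpha$-bound, and it deserves to be closed rather than cited: if $S_-$ indexes the $m=[(\nu_k+1)/2]$ decaying roots, then for $c$ supported on $S_-$ the top $m\times m$ minor of the column-submatrix $V_-$ of $T_0$ is a Vandermonde in $m$ distinct roots of unity, hence invertible with constants depending only on $\nu_k$; so some entry $(V_-c)_i$ with $i\le m-1$ satisfies $|(V_-c)_i|\ge c_0|c|$, whence $|DV_-c|\ge c_0\Lambda^{m-1}|c|$ and $|T^{-1}|_{H_{g-}}|\le C\Lambda^{-(m-1)}=C\alpha$. This is the "intermediate power" you allude to, but as written your argument asserts it rather than derives it, and "minimum singular value of half a DFT matrix" is not by itself the reason --- the weighting by $D$ and the location of the surviving rows matter. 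Two smaller points: the count of decaying roots you quote ($[(\nu_k-1)/2]$ or $\nu_k/2$) does not match the $[(\nu_k+1)/2]$ appearing in $\alpha$, so you should pin down the worst case; and the perturbation argument must absorb not only the $i\sigma Q_k'$ term but also the $\cO(\hat\gamma+\rho)$ remainder in \eqref{Dbl1}, which is harmless here only because the lemma is invoked in Case b where $\sigma\gtrsim\rho^\epsilon$, so that $\rho\ll\Lambda$.
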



Simple calculations show that eigenvalues of $Q_k$ are
\begin{equation} \alpha_{k,j} = i\ul{\mu_k}+\pi_{k,j} +
o(\sigma^{1/\nu_k}), \qquad j = 0,1,...,s-1.\end{equation} Here,
$\pi_{k,j}= \epsilon^j i (q_{\nu_k}\sigma)^{1/\nu_k}$, with
$\epsilon = 1^{1/\nu_k}$. We can further change of coordinates if
necessary to assume that
\begin{equation} Q'_k:=T^{-1}_{H_g}Q_k T_{H_g} =
 \diag (\alpha_{k,1},\cdots, \alpha_{k,l},\alpha_{k,l+1},\cdots ,\alpha_{k,\nu_k})\end{equation} with
 \begin{equation}\label{rea-est}\begin{aligned}-&\R ~\alpha_{k,j} >0, \quad j = 1,...,l,\\
 &\R ~\alpha_{k,j} >0 , \quad j = l+1,...,\nu_k.\end{aligned}\end{equation}

Hence, applying Lemma \ref{lem-bounds} to equations of $u'_{H_g}$
with $S = Id$ or $S = -Id$, we easily obtain
\begin{equation}\label{glbound}\begin{aligned} |u'_{H_{g+}}|^2_{L^\infty}+ \rho
\min_j|\R~\alpha_{k,j}||u'_{H_{g+}}|_{L^2}^2 &\lesssim
|F'_{H_{g+}}|_{L^1}^2,
\\|u'_{H_{g-}}|^2_{L^\infty} + \rho\min_j|\R~\alpha_{k,j}||u'_{H_{g-}}|_{L^2}^2&\lesssim |u'_{H_{g-}}(0)|^2
+ |F'_{H_{g-}}|_{L^1}^2.\end{aligned}\end{equation} The diagonalized
boundary condition $\Gamma':=\Gamma_a T_{H_g}$. By computing, we
observe that
$$ |\Gamma' u'_{H_{g-}}| =
|\Gamma u_{H_{g-}}| \ge C^{-1} |u_{H_{g-}}| \ge
\frac{C^{-1}|u'_{H_{g-}}|}{|T^{-1}_{{H_g}|_{H_{g-}}}|} \ge
C^{-1}\alpha^{-1}||u'_{H_{g-}}|.$$ Thus,
\begin{equation}\begin{aligned} |u'_{H_{g-}}| &\le C\alpha  |\Gamma' u'_{H_{g-}}|
\le C\alpha(|\Gamma' u'|+|\Gamma'u'_+|) \le
C\alpha|u'_+|.\end{aligned}\end{equation} Using this estimate,
\eqref{T-est}, and \eqref{goodbound1}, the estimate \eqref{glbound}
yields
\begin{equation}\label{glbound1}\begin{aligned}
\alpha^{-2}|u_{H_{g}}|^2_{L^\infty}+ \rho
\alpha^{-2}\min_j|\R~\alpha_{k,j}||u_{H_{g}}|_{L^2}^2 &\lesssim
\beta^2|f|_{L^1}^2.\end{aligned}\end{equation}

Recalling that $\alpha,\beta$ are defined as in \eqref{albeta} and
the fact that we are in the case of $\sigma \ge \rho^\epsilon$ for
some small $\epsilon>0$, we get
\begin{equation}\label{goodbound2} |u_{H_{g}}|_{L^\infty} \le C
\alpha \beta |f|_{L^1} \le C \rho^{-2\epsilon}
|f|_{L^1},\end{equation} from which we obtain the refined bounds
\eqref{estZ-refined} for this case as well.

\begin{remark}\label{rem-H4'}\textup{In case b) above,
we use the nonvanishing of $q_{\nu_k}$ to make sure that $\sigma
q_{\nu_k}$ is order of $\sigma$ in the neighborhood $\omega$ of
$(\underline{\hat\zeta},0)$ so that the lower left hand entry of
$Q_k$ dominates and thus we can be sure to diagonalize the block.
Otherwise, the other entries of $Q_k$ in \eqref{gl-block1} may
dominate and the behavior is not clear. The nonvanishing of
$q_{\nu_k}$ is guaranteed by our additional Hypothesis (H4$'$) as
shown in the proof of Proposition \ref{prop-blockst}. This is only
place  in the paper where the assumption (H4$'$) is used.}
\end{remark}

\subsection{$L^1\to L^p$ estimates} We establish the $L^1\to L^p$ resolvent bounds for low frequency regime,
restricting our attention to the surface
\begin{equation}\label{gamma}\Gamma^{\tilde \xi}:=\{\lambda~:~\R\lambda =
-\theta_1(|\tilde \xi|^2 + |\I\lambda|^2)\},\end{equation} for
$\theta_1>0$. Taking $\theta_1$ to be sufficiently small such that
all earlier resolvent estimates are still valid on $\Gamma^\txi$,
with $\rho :=|(\tilde \xi,\lambda)|$ being sufficiently small. Thus,
we obtain the following:

\begin{proposition}[Low-frequency bounds]\label{prop-resLF} Under the hypotheses of Theorem \ref{theo-nonlin}, for $\lambda \in \Gamma^{\tilde \xi}$ and
$\rho :=|(\tilde
\xi,\lambda)|$, $\theta_1$ sufficiently small, there holds the
resolvent bound \begin{equation}\label{res-bound} |(L_{\tilde
\xi}-\lambda)^{-1}\partial_{x_1}^\beta f|_{L^p(x_1)} \le
C\rho^{-1-1/p+\epsilon}[\rho^\beta| f|_{L^1(x_1)}+
|f|_{L^\infty(x_1)}],\end{equation} for all $2\le p\le \infty$,
$\beta =0,1$, and $\epsilon>0$.
\end{proposition}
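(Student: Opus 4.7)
The plan is to derive the stated $L^p$ resolvent bound by converting $U = (L_\txi - \lambda)^{-1}\partial_{x_1}^\beta f$ to the block-diagonalized first-order variable $Z$ introduced in Section \ref{GMWZresults}, applying the refined resolvent estimates \eqref{estZ-refined} at the two endpoints $p=2$ and $p=\infty$, and interpolating in between. The restriction $\lambda \in \Gamma^\txi$ with $\theta_1$ sufficiently small is exactly what keeps the construction of Section \ref{sec-refinedbd} valid on this surface and makes $\rho=|(\txi,\lambda)|$ a uniformly small parameter.

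First I treat the case $\beta = 0$, in which $U$ solves $(L_\txi - \lambda) U = f$. Since the coordinate changes $U \leftrightarrow W \leftrightarrow Y \leftrightarrow Z$ are uniformly bounded (as recorded right after \eqref{max-est}), the estimates \eqref{estZ-refined} translate directly into
\[
|U|_{L^\infty(x_1)} \lesssim \rho^{-1+\epsilon}(|f|_{L^1} + |f|_{L^\infty}),\qquad |U|_{L^2(x_1)} \lesssim \rho^{-3/2+\epsilon}(|f|_{L^1} + |f|_{L^\infty}),
\]
which are precisely \eqref{res-bound} for $p=\infty$ and $p=2$ respectively. For $2 < p < \infty$ I interpolate via $|U|_{L^p} \le |U|_{L^2}^{2/p}|U|_{L^\infty}^{1-2/p}$, producing exponent $(-\tfrac32)\cdot\tfrac2p + (-1)\cdot(1-\tfrac2p) + \epsilon = -1 - 1/p + \epsilon$, which matches the claim.

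For $\beta = 1$, I would split the source in the first-order system \eqref{1steqsW} by writing $W = \widetilde W + W_f$, where $W_f$ is an explicit algebraic correction in $f$ designed so that $\partial_{x_1} W_f$ absorbs exactly the $\partial_{x_1} f$ piece appearing in the right-hand side. The residual equation for $\widetilde W$ then has the form $\partial_{x_1}\widetilde W = \cG \widetilde W + \cG W_f + (\text{lower order})$. Crucially, on the low-frequency regime $|\cG| \lesssim \rho$, so the new source is of size $\rho\,|f|$, and applying the $\beta = 0$ estimate to $\widetilde W$ yields $|\widetilde W|_{L^p} \lesssim \rho^{-1-1/p+\epsilon}\cdot\rho\cdot(|f|_{L^1}+|f|_{L^\infty})$. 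The leftover contribution $|W_f|_{L^p} \lesssim |f|_{L^p}$ is absorbed into $\rho^{-1-1/p+\epsilon}|f|_{L^\infty}$ by interpolating $|f|_{L^p} \le |f|_{L^1}^{1/p}|f|_{L^\infty}^{1-1/p}$ and using $\rho^{-1-1/p+\epsilon} \gtrsim 1$. Summing the two pieces produces exactly $\rho^{-1-1/p+\epsilon}(\rho|f|_{L^1} + |f|_{L^\infty})$.

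The main obstacle will be the $\beta = 1$ case: one must arrange that the splitting $W = \widetilde W + W_f$ is compatible with the boundary condition $\Gamma W = 0$ at $x_1 = 0$ (handled by adding a boundary-matching piece to $W_f$ whose $L^p$ cost is bounded by the trace of $f$, hence by $|f|_{L^\infty}$), and that the lower-order terms generated when peeling $\partial_{x_1}$ off the source do not erode the gain of a factor $\rho$. The latter relies on the bound $|\cG| \lesssim \rho$ valid on the low-frequency regime together with smoothness of the conjugating matrices from the MZ conjugation lemma, so both issues are tractable but require careful bookkeeping to preserve the already narrow exponent $\rho^\epsilon$ inherited from \eqref{estZ-refined}.
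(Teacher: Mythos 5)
Your $\beta=0$ argument coincides with the paper's: transfer the refined bounds \eqref{estZ-refined} back to $U$ through the uniformly bounded conjugators $\Phi$, $V$ and interpolate between $p=2$ and $p=\infty$; that part is correct.

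The $\beta=1$ step, however, contains a genuine gap. Your entire gain of the factor $\rho$ on the $|f|_{L^1}$ term rests on the claim that $|\cG|\lesssim\rho$ in the low-frequency regime, and that claim is false. The first-order matrix $\cG$ of \eqref{1steqsW} contains, after conjugation, the parabolic block $P$, whose eigenvalues satisfy $|\Re\mu|\ge c>0$ uniformly as $\rho\to 0$ (this spectral gap is precisely what makes the $u_P$ estimates strong); correspondingly $\cG=\cO(1)$, and only the hyperbolic block $H$ is $\cO(\rho)$. Hence after the algebraic peeling $W=\widetilde W+W_f$ with $W_f$ proportional to $f$, the residual source $\cG W_f$ is of size $|f|$, not $\rho|f|$, and feeding it into the $\beta=0$ estimate returns $\rho^{-1-1/p+\epsilon}|f|_{L^1}$ with no gain, which does not yield \eqref{res-bound}. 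A secondary issue: peeling $\partial_{x_1}$ off the source also produces commutator terms from the $x_1$-dependence of $A^1,B^{11}$ (and of the conjugator $\Phi$), of size $\cO(e^{-\theta x_1})|f|$; these again carry no factor of $\rho$ and must be charged to $|f|_{L^\infty}$, which your sketch does not arrange.

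The paper's proof extracts the factor $\rho$ from a different place. One solves the auxiliary \emph{whole-line}, variable-coefficient, one-dimensional problem $(L_0-\lambda)V=\partial_{x_1}f$ as in \eqref{auxeqs}; the one-dimensional pointwise Green-kernel bound \eqref{ptbounds} on $\partial_{y_1}G^0_\lambda$ supplies the factor $\rho+e^{-\theta|y_1|}$ and hence \eqref{Vbound}, with the localized piece charged to $|f|_{L^\infty}$. The residual $U_1=U-V$ then satisfies \eqref{U1eqs} with source $-(L_\txi-L_0)V=\rho\,\cO(|V|+|V_{x_1}|)$: here the factor $\rho$ is genuine because every term of $L_\txi-L_0$ carries a transverse frequency $\xi_j$, $j\ne 1$, or $\lambda$. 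In short, the smallness comes from the transverse part of the operator, not from the normal-direction system matrix $\cG$. If you wish to keep a purely first-order-system argument, the subtraction would have to be done blockwise after conjugation, with the parabolic block handled by its own $\cO(1)$ estimates, and even then the $p=\infty$ endpoint of the $|f|_{L^1}$ term does not close without an additional device of the Kreiss--Kreiss type.
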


\begin{proof} Following \cite{Z2,Z3}, define the curves
$$(\txi,\lambda)(\rho,\hat\txi,\hat \tau):=(\rho \hat \txi,\rho i\hat \tau-
\theta_1\rho^2),$$ where $\hat\txi \in \RR^{d-1}, \hat\tau \in \RR$
and $(\hat\txi,\hat \tau)\in S^d$: $|\hat\txi|^2 +|\hat \tau|^2 =1$.
As $(\rho,\hat\txi,\hat \tau)$ range in the compact set
$[0,\delta]\times S^d$, $(\txi,\lambda)$ traces out the portion of
the surface $\Gamma^\txi$ contained in the set $|\txi|^2 +
|\lambda|^2 \le \delta$. Thus, using $L^2$ and $L^\infty$ estimates
obtained in previous sections with $\hat \gamma=0$ and applying the
interpolation inequality between $L^2$ and $L^\infty$ spaces, we
obtain the proposition in the case $\beta=0$.


Now, recalling that $W = \Phi V Z$ and all coordinate transformation
matrices are uniformly bounded, the refined bounds of $Z$ therefore
imply improved bounds for $W$ and thus $U$. Bounds for $L^p$,
$2<p<\infty$, are obtained by interpolation inequality between $L^2$
and $L^\infty$. Hence, we have proved the bounds for $\beta =0$ as
claimed.


For $\beta =1$, we expect that $\partial_{x_1} f$ plays a role as
``$\rho f$'' forcing. Recall that the eigenvalue equations $(L_\txi
- \lambda)U =\partial_{x_1}f$ read
\begin{equation}\label{eg-eqs}\begin{aligned}\overbrace{(B^{11}U_{x_1})_{x_1}-(A^1U)_{x_1}}^{L_0U} - &i\sum_{j\not=1}A^j\xi_jU +
i\sum_{j\not=1}B^{j1}\xi_jU_{x_1}
\\&+i\sum_{k\not=1}(B^{1k}\xi_kU)_{x_1} -
\sum_{j,k\not=1}B^{jk}\xi_j\xi_k U - \lambda U =\partial_{x_1}f.
\end{aligned}\end{equation}

Now modifying the nice argument of Kreiss-Kreiss presented in
\cite{KK,GMWZ1}, we write $U = V + U_1$, where $V$ satisfies
\begin{equation}\label{auxeqs} (L_0-\lambda) V =
\partial_{x_1} f,\qquad x_1\in \RR. \end{equation}Noting that $A^1$ and
$B^{11}$ depend on $x_1$ only, we thus obtain by one-dimensional
results (see \cite{MaZ3,Z3}) the following pointwise bounds on Green
kernel $G^0_\lambda$ of $\lambda - L_0$,
\begin{equation} \label{ptbounds} |\partial_{y_1}G^0_\lambda(x_1,y_1)|
\le C e^{-\rho|x_1-y_1|}(\rho + e^{-\theta|y_1|}).\end{equation}

Hence, employing Hausdorff-Young's inequality, we obtain
\begin{equation}\label{Vbound} |V|_{L^p(x_1)}+ |V_{x_1}|_{L^p(x_1)} \le
C\rho^{-1/p}[\rho|f|_{L^1(x_1)} + |f|_{L^\infty(x_1)}],\end{equation} for
all $1\le p\le \infty$.

Now from $U_1 = U-V$ and equations of $U$ and $V$, we observe that
$U_1$ satisfies
\begin{equation}\label{U1eqs}\begin{aligned}(L_\txi  - \lambda )U_1 = L(V,V_{x_1}),
\end{aligned}\end{equation} where $ L(V,V_{x_1}) = \rho\cO(|V|+|V_{x_1}|)$.

Therefore applying the result which we just proved for $\beta=0$ to
the equations \eqref{U1eqs}, we obtain
\begin{equation}\begin{aligned}|U_1|_{L^p(x_1)} &\le
C\rho^{-1-1/p+\epsilon}\Big[|L(V,V_{x_1})|_{L^1(x_1)}+|L(V,V_{x_1})|_{L^\infty(x_1)}\Big]\\&\le C\rho^{-1-1/p+\epsilon}\rho
\Big[|V|_{L^q}+|V_{x_1}|_{L^q}\Big] \\&\le
C\rho^{-1/p+\epsilon}[|f|_{L^1(x_1)}+
\rho^{-1}|f|_{L^\infty(x_1)}].\end{aligned}\end{equation}

Bounds on $V$ and $U_1$ clearly give our claimed bounds on $U$ by
triangle inequality: $$|U|_{L^p} \le |V|_{L^p}+|U_1|_{L^p}.$$ We
obtain the proposition for the case $\beta=1$, and thus complete the
proof.

\end{proof}

\subsection{Estimates on the solution operator}\label{sec-estS-comp} In this subsection, we complete the proof of Proposition \ref{prop-estS}. As mentioned earlier, it suffices to prove the bounds for $\cS_1(t)$, where the low frequency solution
operator $\cS_1(t)$ is defined as
\begin{equation}\label{cS1}\cS_1(t):=\frac{1}{(2\pi i)^d}\int_{|\tilde \xi|\le
r}\oint_{\Gamma^{\tilde \xi}} e^{\lambda t + i\tilde \xi \cdot\tilde
x}(L_{\txi} - \lambda)^{-1} d\lambda d\txi.\end{equation}

\begin{proof}[Proof of bounds on $\cS_1(t)$] Let $\hat u(x_1,\txi,\lambda)$ denote the solution of
$(L_\txi-\lambda)\hat u = \hat f$, where $\hat f(x_1,\txi)$ denotes
Fourier transform of $f$, and
$$u(x,t):=\cS_1(t)f = \frac{1}{(2\pi i)^d}\int_{|\txi|\le r}\oint _{\Gamma^\txi}
e^{\lambda t+i\txi \cdot \tx}(L_\txi - \lambda)^{-1}\hat
f(x_1,\txi)d\lambda d\txi.$$

Using Parseval's identity, Fubini's theorem, the triangle
inequality, and Proposition \ref{prop-resLF}, we may estimate $$\begin{aligned}
|u|_{L^2(x_1,\tx)}^2(t) &= \frac{1}{(2\pi)^{2d}}\int_{x_1}
\int_{\txi}\Big|\oint_{\Gamma^\txi} e^{\lambda t}\hat
u(x_1,\txi,\lambda)d\lambda\Big|^2 d\txi dx_1
\\&\le
\frac{1}{(2\pi)^{2d}}\int_{\txi}\Big|\oint_{\Gamma^\txi}
e^{\R\lambda t}|\hat u(x_1,\txi,\lambda)|_{L^2(x_1)}d\lambda\Big|^2
d\txi \\&\le C[|f|_{L^1(x)}+|f|_{L^{1,\infty}_{\tx,x_1}}]^2\int_{\txi}\Big|\oint_{\Gamma^\txi} e^{\R\lambda
t}\rho^{-3/2+\epsilon}d\lambda\Big|^2 d\txi.
\end{aligned}$$

Specifically, parametrizing $\Gamma^\txi$ by $$\lambda(\txi,k) = ik
- \theta_1(k^2 + |\txi|^2), \quad k\in \RR,$$ we estimate
$$\begin{aligned}
\int_{\txi}\Big|\oint_{\Gamma^\txi} e^{\R\lambda
t}\rho^{-3/2+\epsilon}d\lambda\Big|^2 d\txi &\le
\int_{\txi}\Big|\int_\RR e^{-\theta_1(k^2+|\txi|^2)
t}\rho^{-3/2+\epsilon}dk\Big|^2 d\txi\\&\le
\int_{\txi}e^{-2\theta_1|\txi|^2t}|\txi|^{-1}\Big|\int_\RR
e^{-\theta_1k^2t}|k|^{\epsilon-1}dk\Big|^2 d\txi
\\&\le
Ct^{-(d-2)/2-\epsilon},
\end{aligned}$$ noting that $\int_{\RR^{d-1}} e^{-\theta |x|^2} |x|^{-\alpha} dx$ is finite, provided $\alpha < d-1$.

Similarly, we estimate $$\begin{aligned}
|u|_{L^{2,\infty}_{\tx,x_1}}^2(t)&\le
\frac{1}{(2\pi)^{2d}}\int_{\txi}\Big|\oint_{\Gamma^\txi}
e^{\R\lambda t}|\hat
u(x_1,\txi,\lambda)|_{L^\infty(x_1)}d\lambda\Big|^2 d\txi \\&\le
C[|f|_{L^1(x)}+|f|_{L^{1,\infty}_{\tx,x_1}}]^2\int_{\txi}\Big|\oint_{\Gamma^\txi} e^{\R\lambda
t}\rho^{-1+\epsilon}d\lambda\Big|^2 d\txi
\end{aligned}$$
where, parametrizing $\Gamma^\txi$ as above, we have
$$\begin{aligned}
\int_{\txi}\Big|\oint_{\Gamma^\txi} e^{\R\lambda
t}\rho^{-1+\epsilon}d\lambda\Big|^2 d\txi &\le
\int_{\txi}e^{-\theta_1|\txi|^2 t}\Big|\int_\RR
e^{-\theta_1k^2t}|k|^{\epsilon-1}dk\Big|^2d\txi\\&\le Ct^{-(d-1)/2
-\epsilon}.
\end{aligned}$$

Finally, we estimate $$\begin{aligned} |u|_{L^\infty_{\tx, x_1}}(t)
&\le\frac{1}{(2\pi)^{d}} \int_{\txi}\oint_{\Gamma^\txi} e^{\R\lambda
t}|\hat u(x_1,\txi,\lambda)|_{L^\infty(x_1)}d\lambda d\txi \\&\le
C[|f|_{L^1(x)}+|f|_{L^{1,\infty}_{\tx,x_1}}]\int_{\txi}\oint_{\Gamma^\txi} e^{\R\lambda
t}\rho^{-1+\epsilon}d\lambda d\txi
\end{aligned}$$
where, parametrizing $\Gamma^\txi$ as above, we have
$$\begin{aligned}
\int_{\txi}\oint_{\Gamma^\txi} e^{\R\lambda t}\rho^{-1+\epsilon}d\lambda
d\txi &\le \int_{\txi}e^{-\theta_1|\txi|^2
t}\int_\RR
e^{-\theta_1k^2t}|k|^{\epsilon-1}dkd\txi\\&\le Ct^{-(d-1)/2-\epsilon/2}.
\end{aligned}$$ The $x_1-$derivative bounds follow similarly by using the version of the $L^1\to L^p$ estimates for $\beta_1=1$. The $\tx-$derivative bounds are straightforward by the fact that $\widehat{\partial_{\tx}^{\tilde \beta} f} = (i\txi)^{\tilde \beta} \hat f$.
\end{proof}

\section{Two--dimensional case or cases with (H4)}\label{sec-2dcase}
In this section, we give 
an immediate proof of Theorem \ref{theo-stabH4}. Notice that the
only assumption we make here that differs from those in \cite{NZ2}
is the relaxed Hypothesis (H3$'$), treating the case of totally
nonglancing characteristic
roots, which is only involved in low--frequency estimates. That is to say, we 
only need to establish the $L^1\to L^p$ bounds in low-frequency
regimes for this new case.

\begin{proposition}[Low-frequency bounds; \cite{NZ2}, Proposition 3.3]\label{prop-resLFH5} Under
 the hypotheses of Theorem \ref{theo-stabH4},
for $\lambda \in \Gamma^{\tilde \xi}$ (see \eqref{gamma}) and $\rho
:=|(\tilde \xi,\lambda)|$, $\theta_1$ sufficiently small, there
holds the resolvent bound \begin{equation}\label{res-boundH5}
|(L_{\tilde \xi}-\lambda)^{-1}\partial_{x_1}^\beta f|_{L^p(x_1)} \le
C\gamma_2\rho^{-2/p}\Big[\rho^{\beta}|\hat f|_{L^1(x_1)} +
\beta|\hat f|_{L^\infty(x_1)} \Big],\end{equation} for all $2\le
p\le \infty$, $\beta =0,1$, and $\gamma_2$ is the diagonalization
error (see \cite{Z3}, (5.40)) defined as
\begin{equation}\label{gamma2} \gamma_2 := 1+ \sum_{j,\pm}\Big[\rho^{-1}|\I\lambda
 - \eta_j^\pm(\txi)|+\rho\Big]^{1/s_j-1},\end{equation} with
 $\eta_j^\pm,s_j$ as in (H4).
\end{proposition}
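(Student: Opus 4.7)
The plan is to follow the same Kreiss--symmetrizer framework used in Section \ref{sec-estS} for Proposition \ref{prop-resLF}, but exploiting the global (in $\tilde\xi$) block structure provided by Hypothesis (H4) instead of the pointwise block structure of Proposition \ref{prop-blockst}. Reduce the resolvent equation $(L_\txi-\lambda)U = \partial_{x_1}^\beta f$ via the MZ conjugation lemma to a first--order constant--coefficient system \eqref{1steqsZ} with boundary condition $\bar\Gamma Z=0$, and split $Z$ according to the spectral decomposition of the limiting coefficient $H_0(\lambda,\txi)$ into parabolic, elliptic, hyperbolic, glancing, and totally nonglancing modes. For the first four categories, this is exactly the setting treated in \cite{NZ2} Proposition 3.3: the parabolic/elliptic/hyperbolic modes are handled by the same symmetrizer estimates as in Section \ref{sec-estS} (giving bounds without any $\gamma_2$ factor), while the glancing blocks are diagonalized as in Case b of Subsection \ref{sec-refinedbd}, where now (H4) guarantees that the branch--point curves $\tau=\eta_q^+(\tilde\xi)$ are smooth and of constant multiplicity $s_q$, so the diagonalization can be performed uniformly away from these curves with error measured precisely by $\gamma_2$ as defined in \eqref{gamma2}.

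The only genuinely new ingredient is the totally nonglancing block that arises under the relaxed hypothesis (H3$'$). For this I would simply import the estimate already established in Subsection \ref{subsec-totalnongl}: by the symmetrizers constructed in \cite{GMWZ6}, one obtains
\[
|u_{H_{t\pm}}|^2_{L^\infty} + (\gamma+\rho^2)|u_{H_{t\pm}}|^2_{L^2}
\lesssim |u_{H_{t\pm}}(0)|^2 + |F_{H_{t\pm}}|^2_{L^1},
\]
which is strictly better than the $\gamma_2$--bound needed, and in particular fits into the pattern of \eqref{goodbound1}. Combined with the GMWZ maximal estimate \eqref{max-est} (which controls the boundary trace $|u(0)|$), this yields the claimed bound \eqref{res-boundH5} on $L^2$ and $L^\infty$ when $\beta=0$. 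The general $L^p$ case, $2\le p\le\infty$, then follows by interpolation between $L^2$ and $L^\infty$ on the $x_1$--variable, as in the proof of Proposition \ref{prop-resLF}.

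For the derivative case $\beta=1$ I would repeat the Kreiss--Kreiss splitting $U=V+U_1$ used in Section \ref{sec-estS}: solve the one--dimensional auxiliary problem \eqref{auxeqs} on the full line using the pointwise Green--kernel bound \eqref{ptbounds} to obtain
\[
|V|_{L^p(x_1)}+|V_{x_1}|_{L^p(x_1)} \le C\rho^{-1/p}\bigl[\rho|f|_{L^1}+|f|_{L^\infty}\bigr],
\]
and then apply the $\beta=0$ case of \eqref{res-boundH5} just established to the inhomogeneous equation for $U_1$, whose source $L(V,V_{x_1})$ is of order $\rho(|V|+|V_{x_1}|)$. A triangle inequality then produces the full bound with the extra $|f|_{L^\infty}$ term on the right.

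The main obstacle, as in \cite{NZ2}, is keeping track of the diagonalization error $\gamma_2$ near the branch curves: the transformation $T_{H_g}$ degenerates on $\tau=\eta_j^\pm(\tilde\xi)$ with precisely the rate $[\rho^{-1}|\I\lambda-\eta_j^\pm|+\rho]^{1/s_j-1}$, and one must verify that this loss is compensated by the contraction coming from the symmetrizer and by the boundary trace control afforded by \eqref{max-est} restricted to the image of $T_{H_g}^{-1}$. Once this bookkeeping is carried out exactly as in \cite{NZ2}, no further work is required since the totally nonglancing contribution is strictly better behaved than the glancing one.
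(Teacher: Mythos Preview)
Your proposal is correct and follows essentially the same approach as the paper: the paper's proof explicitly states that only the totally nonglancing block is new relative to \cite{NZ2}, imports the estimate \eqref{totalnongl-est} from Subsection \ref{subsec-totalnongl}, controls the boundary trace via \eqref{max-est}, interpolates for $L^p$, and invokes the Kreiss--Kreiss trick for $\beta=1$. Your additional discussion of the $\gamma_2$ bookkeeping near branch curves is simply an elaboration of what is already contained in \cite{NZ2} and referenced by the paper.
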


\begin{proof} We only need to treat the new case: the totally nonglancing blocks
$Q_t^k$. But this is already treated in our previous subsection,
Subsection \ref{subsec-totalnongl}, yielding
\begin{equation}\label{totnongl}\begin{aligned} |u_{H_{t+}}|^2_{L^\infty}+ \rho^2|u_{H_{t+}}|_{L^2}^2 &\lesssim |F_{H_{t+}}|_{L^1}^2,
\\|u_{H_{t-}}|^2_{L^\infty} + \rho^2|u_{H_{t-}}|_{L^2}^2&\lesssim |u_{H_{t-}}(0)|^2
+ |F_{H_{t-}}|_{L^1}^2,\end{aligned}\end{equation}where the boundary
term $|u_{H_{t-}}(0)|^2$ can be treated by applying the $L^2$
stability estimate \eqref{max-est}. Thus, together with a use of the
standard interpolation inequality, we have obtained
\begin{equation}\label{nonglest}
|u_{H_t}|_{L^p(x_1)} \le C\gamma_2\rho^{-1}|
f|_{L^1(x_1)},\end{equation} for all $2\le p\le \infty$ and
$\gamma_2$ defined as in \eqref{gamma2}, yielding
\eqref{res-boundH5} for $\beta=0$. For $\beta=1$, we can follow the
Kreiss--Kreiss trick as done in the proof of Proposition
\ref{prop-resLF}, completing the proof of Proposition
\ref{prop-resLFH5}. \end{proof}

\begin{proof}[Proof of Theorem \ref{theo-stabH4}] Proposition \ref{prop-resLFH5} is Proposition 3.3 in
\cite{NZ2} with an extension to the totally nonglancing cases. Thus,
we can now follow word by word the proof in \cite{NZ2}, yielding the
theorem.
\end{proof}


\appendix

\section{Genericity of (H4$'$)}\label{genH4}

Genericity of our additional structural assumption (H4$'$) is clear.
Indeed, violation of the condition would require $d$ equations:
$\partial_{\xi_j}\lambda_k(\xi) =0$ for all $j = 1,\cdots,d$,
whereas only $d-1$ parameters in $\xi\in \RR^d\setminus \{0\}$ are
varied as $\xi$ may be constrained in the unit sphere $S^d$ by
homogeneity of $\lambda(\xi)$ in $\xi$.

Finally, we give the following counterexample of Kevin Zumbrun in
the two--dimensional case for which the hypothesis (H4$'$) fails.

\begin{counterexample}\textup{Let
\begin{equation}A_1:=\begin{pmatrix} 0&1\\1&0
\end{pmatrix}\qquad A_2:=\begin{pmatrix} 0&0\\0&1
\end{pmatrix}.\end{equation}
Then both $A_1$ and $A_2$ are clearly symmetric and do not commute.
However, at $\xi_1=0$, the matrix $\xi_1A_1 + \xi_2 A_2$ has an
eigenvalue ($\lambda(\xi) \equiv 0$) such that $\nabla\lambda=0$,
violating (H4$'$). }
\end{counterexample}

Counterexamples for higher--dimensional cases can be constructed
similarly.

\end{document}